\numberwithin{equation}{section}
\newtheorem{thm}{Theorem}[section]
\newtheorem{lem}[thm]{Lemma}
\newtheorem{prop}[thm]{Proposition}
\newtheorem*{Gap}{Gap lemma}
\theoremstyle{definition}
\newtheorem{prob}[thm]{Problem}
\newtheorem{remark}[thm]{Remark}
\newtheorem*{rem}{Remark}
\def\N{{\mathbb N}}
\def\R{{\mathbb R}}
\def\R{{\mathbb R}}
\def\embed{\hookrightarrow}
\def\A{{\mathcal A}}
\def\F{{\mathcal F}}
\def\G{{\mathcal G}}
\def\ep{\varepsilon}
\newcommand{\con}{\!\smallfrown\!}
\newcommand{\Con}{\smallfrown}
\def\chix{{\raise.5ex\hbox{$\chi$}}}
\def\embed{\hookrightarrow}
\newcommand{\vertiii}[1]{{\left\vert\kern-0.25ex\left\vert\kern-0.25ex\left\vert #1 
    \right\vert\kern-0.25ex\right\vert\kern-0.25ex\right\vert}}
\begin{document}
\allowdisplaybreaks

\title{On spreading sequences and asymptotic structures}
\author{D. Freeman}
\author{E. Odell} \thanks{Edward Odell (1947-2013).}
\author{B. Sar\i}
\author{B. Zheng}

\address{Department of Mathematics and Computer Science\\
Saint Louis University , St Louis, MO 63103  USA}
\email{dfreema7@slu.edu}

\address{Department of Mathematics\\ The University of Texas at Austin\\
1 University Station C1200\\ Austin, TX 78712-0257}

\address{Department of Mathematics, University of North Texas, Denton, TX 76203-5017}
\email{bunyamin@unt.edu}

\address{Department of Mathematics, University of Memphis, Memphis, TN, 38152-3240}
\email{bzheng@memphis.edu}

\thanks{Research of the first, second, and fourth author was supported by the
  National Science Foundation.}
  \thanks{  Research of the first author was supported by grant 353293 from the Simon's foundation and the third author was
  supported by grant 208290 from the Simon's Foundation.}
\begin{abstract}
In the first part of the paper we study the structure of Banach spaces with a conditional spreading basis. The geometry of such spaces exhibit a striking resemblance to the geometry of James' space. Further, we show that the averaging projections onto subspaces spanned by constant coefficient blocks with no gaps between supports are bounded. As a consequence, every Banach space with a spreading basis contains a complemented subspace with an unconditional basis. This gives an affirmative answer to a question of H. Rosenthal. 

The second part contains two results on Banach spaces $X$ whose asymptotic structures are closely related to $c_0$ and do not contain a copy of $\ell_1$: 

i) Suppose $X$ has a normalized weakly null basis $(x_i)$ and every spreading model $(e_i)$ of a normalized weakly null block basis satisfies $\|e_1-e_2\|=1$. Then some subsequence of $(x_i)$ is equivalent to the unit vector basis of $c_0$. This generalizes a similar theorem of Odell and Schlumprecht, and yields a new proof of the Elton-Odell theorem on the existence of infinite $(1+\ep)$-separated sequences in the unit sphere of an arbitrary infinite dimensional Banach space.

ii) Suppose that all asymptotic models of $X$ generated by weakly null arrays are equivalent to the unit vector basis of $c_0$. Then $X^*$ is separable and $X$ is asymptotic-$c_0$ with respect to a shrinking basis $(y_i)$ of $Y\supseteq X$.
\end{abstract}
\maketitle

\baselineskip=18pt      

\section{Introduction}

A basic sequence $(x_i)$ in a Banach space is called {\em spreading} if it is equivalent to all of its subsequences. If, in addition, the sequence is unconditional then it is called {   \em  subsymmetric}. When $(x_i)$ is spreading and weakly null it is automatically suppression unconditional. { In  Section 2} we will focus most of our attention on spreading sequences that are not unconditional. A famous example is the boundedly complete basis of the James space $J$ and we shall see that much of the structure for $J$ holds more generally for Banach spaces with a conditional spreading basis. 
We observe that if $(e_i)$ is a normalized conditional spreading basis for $X$ then the difference sequence $(d_i)=(e_1, e_2-e_1, e_3-e_2, \ldots)$ is a skipped unconditional basis for $X$. This means that if $(x_j)$ is a normalized block basis of $(d_i)$ with ${\rm supp}(x_j)<i_j<{\rm supp}(x_{j+1})$ for some subsequence $(i_j)$ of $\N$, then $(x_j)$ is unconditional. Here ${\rm supp}(x_j)$ refers to the basis $(d_i)$, that is, {if} $x_j=\sum_i b^j_i d_i$ then ${\rm supp}(x_j)=\{i: b^j_i\neq 0\}$. It follows that, in the case $(e_i)$ is spreading but not weakly null, $\ell_1\not\hookrightarrow X$ ($\ell_1$ does not embed isomorphically into $X$) if and only if the difference basis $(d_i)$ is shrinking. Also we show that $c_0\not\hookrightarrow X$ if and only if $(e_i)$ is boundedly complete. Furthermore, $c_0$ and $\ell_1$ do not embed into $X$ if and only if $X$ is quasi-reflexive of order 1. It is interesting to note that these (except the skipped unconditionality result) were already observed in {the} 1970's by Brunel and Sucheston {\cite{BS}} for ESA { (equal sign additive) bases, which is a stronger property than spreading}. However, our results are more general and the proofs are different. The crucial part of our approach is an unconditionality result, Theorem { \ref{spreading main}}a, which is of independent interest. We also { show} that the well known averaging projection onto disjoint subsets of a subsymmetric basis remains bounded for the conditional spreading case as long as the subsets form a partition. One consequence is that $X$ is isomorphic to $D\oplus X$ where $D$ is the subspace spanned by $(d_{2n})_{n=1}^{\infty}$. Moreover, every Banach space with a spreading basis contains a complemented subspace with an unconditional basis. This answers an open problem of H. Rosenthal.

In Section 3 we make {a} few remarks on Banach spaces that admit conditional spreading models. Our study of the conditional spreading sequences were motivated by the problems discussed in this section.

In section 4 we consider spaces whose asymptotic structure is closely related to $c_0$. In \cite{OS} it was shown that if $(x_i)$ is a basis for $X$ and all spreading models of normalized block bases of $(x_i)$ are 1-equivalent to the unit vector basis of $c_0$, then $c_0$ embeds into $X$. Our first result of Section 4 generalizes this as follows. If $(x_i)$ is weakly null and if {every spreading model $(e_i)$ generated by a {\em weakly null} block basis satisfies $\|e_1-e_2\|=1$ and} $\ell_1\not\hookrightarrow X$, then $c_0\hookrightarrow X$. This yields a quick proof of the Elton-Odell theorem \cite{EO}. Namely, for every Banach space $X$ there exists an infinite sequence $(z_i)$ in the unit sphere $S_X$ and $\lambda>1$ so that $\|z_i-z_j\|\ge \lambda$ for all $i\neq j$. Indeed, if $X$ contains $\ell_1$ or $c_0$ the result follows easily by the non-distortability of $c_0$ and $\ell_1$. Otherwise, fix a weakly null normalized sequence $(x_i)$. By { our theorem, $(x_i)$ }must have {a normalized block basis with }a spreading model $(e_i)$ with $\|e_1-e_2\|>1$ which yields an $\ep>0$ and an infinite $(1+\ep)$-separated {sequence}. 

One of the long standing open problems on asymptotic structures of Banach spaces is the following. Suppose that every spreading model of $X$ is equivalent to the unit vector basis of $c_0$ (or $\ell_p$). Does $X$ contain an asymptotic-$c_0$ (or asymptotic-$\ell_p$) subspace? We solve the $c_0$ case with somewhat stronger assumption. If all normalized {\em asymptotic models} $(e_i)$ of normalized weakly null arrays in $X$ are equivalent to the unit vector basis of $c_0$ and $\ell_1\not\hookrightarrow X$, then $X^*$ is separable and $X$ is asymptotic-$c_0$ with respect to a shrinking basis $(y_i)$ of $Y\supseteq X$. Recall that $(e_i)$ { is }an asymptotic model of $X$, denoted by $(e_i)\in AM_w(X)$, if there exists a normalized array $(x^i_j)_{i, j\in\N}$ so that $(x^i_j)_{j=1}^{\infty}$ is weakly null for all $i\in\N$, and for some $\ep_n\downarrow 0$, all $n$ and all $(a_i)_1^n\subseteq [-1, 1]$ and $n\le k_1<k_2<\ldots <k_n$

\begin{equation}\label{as model}
\left| \Big\|\sum_{i=1}^n a_i x^i_{k_i}\Big\|- \Big\|\sum_{i=1}^n a_i e_i\Big\|\right|\le\ep_n.
\end{equation}
The notion of asymptotic models is a direct generalization of spreading models and it was introduced in \cite{HO}. $X$ is asymptotic-$c_0$ if for some $K<\infty$ for all $n$ and all asymptotic spaces $(e_i)_{i=1}^n$ are $K$-equivalent to the unit vector basis of $\ell_{\infty}^n$ \cite{MMT}. These notions are recalled in Section 4.

\section{Spreading bases}

We begin with a result solving a problem asked of us by S. A. Argyros.

\begin{thm}\label{skipped reflexive}
Let $(e_n)$ be a normalized basis for $X$. If every subspace spanned by a skipped block basis of $(e_n)$ is reflexive then $X$ is either reflexive or quasi-reflexive of order 1.
\end{thm}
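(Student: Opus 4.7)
The plan has two stages: first establish that the basis $(e_n)$ is shrinking, then deduce from non-reflexivity that $\dim X^{**}/X = 1$.

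\emph{Shrinkingness.} If $(e_n)$ is not shrinking, there exist $\delta > 0$, $x^* \in X^*$, and a normalized block basis $(y_n)$ of $(e_n)$ with $|x^*(y_n)| \geq \delta$ for all $n$. After passing to a subsequence so that consecutive supports are separated by at least one index, $(y_n)$ is a skipped block basis and $Y := \overline{\operatorname{span}}(y_n)$ is reflexive by hypothesis. As $(y_n)$ is a normalized Schauder basis of a reflexive space, any weak cluster point is annihilated by every biorthogonal functional and must vanish, so a subsequence $(y_{n_j})$ is weakly null in $Y$, hence in $X$. This contradicts $|x^*(y_{n_j})| \geq \delta$.

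\emph{Quasi-reflexivity.} Suppose $X$ is non-reflexive. Because $(e_n)$ is shrinking, $X^{**}$ identifies with coefficient sequences $(a_n)$ satisfying $\sup_N \|\sum_{n=1}^N a_n e_n\| < \infty$, with $X$ the subspace of norm-convergent series. Assume for contradiction that $u, v \in X^{**}$ are linearly independent modulo $X$; write their coordinate sequences as $(u_n), (v_n) \in \ell_\infty$. By Bolzano--Weierstrass, pick $N_k \uparrow \infty$ with $(u_{N_k}, v_{N_k}) \to (p,q)$, and choose $(\alpha, \beta) \in \mathbb{R}^2 \setminus \{(0,0)\}$ with $\alpha p + \beta q = 0$. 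Then $\alpha u_{N_k} + \beta v_{N_k} \to 0$; after further thinning we may assume $|\alpha u_{N_k} + \beta v_{N_k}| \leq 2^{-k}$. Set
\[
w_k := \alpha(P_{N_{k+1}} u - P_{N_k} u) + \beta(P_{N_{k+1}} v - P_{N_k} v) - (\alpha u_{N_{k+1}} + \beta v_{N_{k+1}}) e_{N_{k+1}} \in X,
\]
where $P_N$ denotes the basis projection. Then $\supp(w_k) \subseteq [N_k+1, N_{k+1}-1]$, so $(w_k)$ is a skipped block sequence (the index $N_{k+1}$ lies in each gap), and by hypothesis $\overline{\operatorname{span}}(w_k)$ is reflexive.

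\emph{Contradiction.} Telescoping,
\[
\sum_{k=1}^{K} w_k = \alpha(P_{N_{K+1}} u - P_{N_1} u) + \beta(P_{N_{K+1}} v - P_{N_1} v) - \sum_{j=2}^{K+1} (\alpha u_{N_j} + \beta v_{N_j}) e_{N_j}.
\]
The first two terms are norm-bounded by the basis constant times $\|u\| + \|v\|$, and the last series converges absolutely in $X$ by the $2^{-k}$ decay. So $(\sum_{k=1}^K w_k)_K$ is norm-bounded and converges weak-$*$ in $X^{**}$ to $\alpha u + \beta v - x_0$ with $x_0 \in X$; since $\alpha u + \beta v \notin X$ by linear independence modulo $X$, the weak-$*$ limit is not in $X$, so $\sum_k w_k$ does not converge in norm. (A short argument using the same $2^{-k}$ bound shows that if $w_k = 0$ for all large $k$ then $\alpha u + \beta v$ already lies in $X$, so after dropping zero terms $(w_k)$ remains a nontrivial skipped block sequence.) Thus $(w_k)$ is a non-boundedly complete Schauder basis of $\overline{\operatorname{span}}(w_k)$, forcing that subspace to be non-reflexive --- contradicting the hypothesis. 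The main obstacle is engineering both the skip (by killing the top coefficient of each block) and the absolute norm-summability of the correction series simultaneously; this requires two independent bad directions in $X^{**}/X$, which explains why the conclusion permits quasi-reflexivity of order one (as in James's space, where $\dim J^{**}/J = 1$) rather than forcing full reflexivity.
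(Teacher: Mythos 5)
Your proof is correct and takes essentially the same approach as the paper: the shrinkingness step is identical, and your construction of the skipped block sequence $(w_k)$ by cutting a suitable linear combination $\alpha u+\beta v$ along a subsequence where the coefficients tend to zero, then invoking bounded completeness in the reflexive span, is exactly the mechanism behind the paper's Claim (that $\liminf_n|F(e_n^*)|=0$ forces $F\in i(X)$) combined with the choice of $\lambda$ so that $G-\lambda F$ has vanishing coefficients along a subsequence. The only difference is cosmetic: you argue by contradiction from two independent directions, whereas the paper isolates the claim for a single functional and then applies it to $G-\lambda F$.
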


\begin{proof}
The hypothesis yields that $(e_n)$ is shrinking. If not, then for some normalized block basis $(x_n)$ of $(e_n)$ there exists $f\in B_{X^*}$ and $\ep>0$ with $f(x_n)>\ep$ for all $n$. But then { $(x_{2n})$} is a skipped block basis of $(e_n)$ which cannot be shrinking, hence cannot span a reflexive space.

Let $F\in X^{**}$. Since the basis $(e_i)$ is shrinking $F$ is the $w^*$-limit of $(\sum_{i=1}^n F(e^*_i)e_i)_{n=1}^{\infty}$ where $(e^*_i)$ is the biorthogonal sequence to $(e_i)$ (a basis for $X^*$). We claim that if
$$\liminf_n |F(e^*_i)|=0,$$ then $F\in i(X)$, { where $i(X)$ is the natural embedding of $X$ into $X^{**}$.}

Indeed, pick a subsequence $(i_j)$ such that $\sum_{j=1}^{\infty}|F(e^*_{i_j})|<\infty$. Let $y=\sum_{j=1}^{\infty}F(e^*_{i_j})e_{i_j}$. Then $y\in i(X)$. Let $G=F-y$. Then $G=w^*-\lim_n\sum_{j=1}^n\sum_{i_j<i<i_{j+1}}F(e^*_i)e_i$ and $(\sum_{i_j<i<i_{j+1}}F(e^*_i)e_i)_{j=1}^{\infty}$ is a skipped block sequence which spans a reflexive subspace. Thus $G\in i(X)$ and so is $F$.

Now suppose $X$ is not reflexive and let $G\in X^{**}$ and $F\in X^{**}\setminus i(X)$.  Choose $\lambda\in\R$ and a subsequence $(i_n)$ of $\N$ so that $G(e^*_{i_n})-\lambda F(e^*_{i_n})\to 0$. Then by the claim above we conclude that $G-\lambda F\in i(X)$. Therefore $X^{**}=\R F\oplus i(X)$.
\end{proof}

\begin{remark}
A generalization of the above from a basis to finite dimensional decompositions (FDD) is false. Indeed, {the} Argyros-Haydon space $\mathfrak X_K$ has an FDD $(M_n)$ with the property that every skipped blocking of $(M_n)$ spans a reflexive subspace and yet its dual is isomorphic to $\ell_1$ (Theorem 9.1, \cite{AH}). We thank Pavlos Motakis for pointing out the example.
\end{remark}

%
%
%
%

We now turn to conditional spreading bases. Suppose that $(e_i)$ is a normalized spreading basis for $X$ which is not weakly null. Then the {\em  summing functional},
$$S(\sum_i a_i e_i):=\sum_i a_i$$
is bounded on $X$.
Indeed for some $\lambda\neq 0$, $f\in X^*${,} and subsequence $(i_n)$ of $\N$ {we have that} $f(e_{i_n})-\lambda\to 0$ rapidly. So a perturbation of $\lambda^{-1}f$ is constantly 1 on the $e_{i_n}$'s. Then it follows from the spreading property that $S$ is bounded on $X$.

By renorming we can assume that $(e_i)$ is normalized, 1-spreading and a bimonotone basis for $X$, and $\|S\|=1$. This is easily achieved by replacing $(e_i)$ by a spreading model of a subsequence, and then by the renorming $\vertiii{x}:=\max(\|x\|, |S(x)|)$. With this we also get that the functional $S_I(\sum_i a_i e_i):=\sum_{i\in I}a_i$ is of norm one for any interval $I$. Note that the boundedness of $S$ implies that the summing basis of $c_0$ is dominated by every conditional spreading sequence.

\begin{thm}\label{spreading main}
Let $(e_i)$ be a normalized 1-spreading, non weakly null, bimonotone basis for $X$. 
\begin{enumerate}
\item[a)] If $(x_i)$ is a normalized block basis of $(e_i)$ with $S(x_i)=0$ for all $i$, then $(x_i)$ is suppression 1-unconditional.
\item[b)] Let $(d_i)=(e_1, e_2-e_1, e_3-e_2, \ldots)$. Then $(d_i)$ is a skipped unconditional basis for $X$.
\item[c)] $(e_i)$ is boundedly complete if and only if $c_0\not\hookrightarrow X$.
\item[d)] $(d_i)$ is shrinking if and only if $\ell_1\not\hookrightarrow X$.
\item[e)] $\ell_1\not\hookrightarrow X$ if and only if $X^*=\R S\oplus [(e^*_i)]$.
\item[f)] $c_0$ and $\ell_1$ do not embed into $X$ if and only if $X$ is quasi-reflexive of order 1.
\end{enumerate}
\end{thm}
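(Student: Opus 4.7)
The theorem splits into six assertions that form a natural chain: (b) uses (a), (d) uses (b), (e) uses (d), and (f) combines (c) and (e). Part (a) is the engine; once it is in hand the remaining parts amount to transferring information through the change-of-basis relation $e_n = d_1 + \dots + d_n$.

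For (a), my plan is to reduce, via $1$-spreading, to the case where the blocks $(x_i)$ have successive adjacent supports, and then show that removing any single block $x_k$ does not increase the norm. A bimonotone projection onto the union of the remaining intervals gives only constant $2$; the sharp constant $1$ must come from the hypothesis $S(x_k)=0$. The cleanest mechanism I see is an averaging argument: using $1$-spreading, shift the blocks to the right of $x_k$ far away to create room, then place $N$ spreading copies $x_k^{(1)},\dots,x_k^{(N)}$ between the left and right pieces. By $1$-spreading each placement has the same norm as $\sum_i a_i x_i$, so the convex combination $\bigl\|y_L + \tfrac{a_k}{N}\sum_\ell x_k^{(\ell)} + y_R\bigr\|$ is also bounded by $\|\sum_i a_i x_i\|$, and taking $N\to\infty$ should reduce to $\|\sum_{i\neq k}a_i x_i\|$. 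The main obstacle is justifying $\bigl\|\tfrac{1}{N}\sum_\ell x_k^{(\ell)}\bigr\|\to 0$; since $(x_k^{(\ell)})_\ell$ is a $1$-spreading block sequence all of whose members lie in $\ker S$, the periodic sum-zero structure of the coefficients keeps every $S_I$-partial-sum uniformly bounded in $N$ and should force the average to vanish. Making this rigorous requires showing $(x_k^{(\ell)})_\ell$ is effectively weakly null (or, equivalently, that no functional can be uniformly nonzero on all the shifted copies without contradicting $S(x_k)=0$).

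Part (b) follows by writing a skipped block basis $(y_j)$ of $(d_i)$ in the $(e_i)$-basis: if $y_j$ has $d$-support in $[r_j, s_j]$ with $r_j \ge 2$, a direct calculation gives $y_j = -\alpha_{r_j} e_{r_j-1} + \sum_{i=r_j}^{s_j-1}(\alpha_i-\alpha_{i+1})e_i + \alpha_{s_j}e_{s_j}$, so the $e$-support is $[r_j-1, s_j]$ and $S(y_j)=0$; the skipped condition forces at least one gap between consecutive $e$-supports. The exceptional case $r_1 = 1$ is handled by splitting $y_1 = \alpha_1 e_1 + z_1$ with $S(z_1)=0$, applying (a) to $(z_1, y_2, y_3, \dots)$, and absorbing the scalar piece using $|\alpha_1| = |S(y_1)| \le \|S\|\cdot\|y_1\| = 1$ together with the basis-constant estimate $|a_1|\lesssim \|\sum_j a_j y_j\|$. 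Part (c): $(\Rightarrow)$ because $(e_i)$ boundedly complete makes $X$ a separable dual, hence RNP, excluding $c_0$; $(\Leftarrow)$ because a non-boundedly-complete $1$-spreading basis yields (after a subsequence) a block basis equivalent to the summing basis of $c_0$. Part (d): $(\Rightarrow)$ follows from separability of $X^* = [(d_i^*)]$, and for $(\Leftarrow)$ a non-shrinking $(d_i)$ produces a normalized block basis $(z_k)$ with $f(z_k) \ge \ep > 0$; then $(z_{2k})$ is skipped in $(d_i)$ so unconditional by (b), and an unconditional basic sequence with a separating functional must have an $\ell_1$-block subsequence.

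For (e), the identities $S = d_1^*$ and $e_j^* = d_j^* - d_{j+1}^*$ (from $e_n = \sum_{i=1}^n d_i$) show $X^* = \R d_1^* + \overline{\mathrm{span}}\{d_j^* - d_{j+1}^*:j\geq 1\} = \R S + [(e_i^*)]$ once (d) gives $(d_i)$ shrinking; directness is immediate because any $g\in[(e_i^*)]$ satisfies $g(e_k)\to 0$ while $S(e_k)=1$. Finally (f): when both $c_0$ and $\ell_1$ fail to embed, (c) gives $(e_i)$ boundedly complete and (e) gives $X^* = \R S \oplus [(e_i^*)]$. Bounded completeness plus bimonotonicity yields the canonical isometry $[(e_i^*)]^* \cong X$, so dualizing the splitting gives $X^{**} \cong \R \oplus X$ with $i(X)$ the second summand, hence $X^{**}/i(X)\cong\R$. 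The reverse implication is clear: quasi-reflexivity of order $1$ makes $X^{**}$ (and $X^*$) separable, excluding $\ell_1$, and also excludes $c_0$ since $c_0 \hookrightarrow X$ would force $\ell_\infty \hookrightarrow X^{**}$.
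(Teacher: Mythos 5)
Parts (b)--(f) of your outline are essentially sound, and in two places you take a genuinely different route than the paper: for (e) you pass through the difference-functional identities $S=d_1^*$, $e_j^*=d_j^*-d_{j+1}^*$ and invoke (d), whereas the paper argues directly that $f-\lambda S=w^*\text{-}\lim\sum b_i e_i^*$ must norm-converge, using the weak Cauchy property of $(e_i)$ and skipped unconditionality to rule out an $\ell_1$-block; for (f) you compute $X^{**}$ by dualizing $X^*=\R S\oplus[(e_i^*)]$ together with $[(e_i^*)]^*\cong X$, while the paper feeds skipped blocks of $(d_i)$ into Theorem~\ref{skipped reflexive}. Both of your alternatives work, though note that under your identification $X^{**}\cong\R\oplus X$ the canonical copy $i(X)$ is the graph $\{(S(x),x)\}$, not the second summand; the quotient $X^{**}/i(X)$ is still one-dimensional, so the conclusion stands.

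The real problem is in (a), which you rightly call the engine. Your averaging scheme hinges on the claim $\bigl\|\tfrac{1}{N}\sum_{\ell=1}^N x_k^{(\ell)}\bigr\|\to 0$, and you propose to obtain this by showing $(x_k^{(\ell)})_\ell$ is ``effectively weakly null.'' That reduction does not close the gap: weak nullness alone does not force Ces\`aro means to vanish (convex combinations tending to $0$ in norm exist by Mazur, but uniform averages are a much more rigid choice --- think of Schreier-type sequences). What you would actually need is a Ces\`aro dichotomy for $1$-spreading sequences: a $1$-spreading sequence in $\ker S$ is either equivalent to the unit vector basis of $\ell_1$ or has vanishing Ces\`aro means; in the first case (a) is trivial, in the second your argument goes through. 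Such a dichotomy is true but is itself a theorem (of Brunel--Sucheston flavor, and not assumed in the statement, where $\ell_1\hookrightarrow X$ is permitted), so asserting it without proof leaves a genuine hole precisely at the point you flagged. The paper sidesteps all of this with Lemma~\ref{unconditionality lemma}: a pigeonhole over the bounded scalars $f(e_i)\in[-1,1]$ produces, \emph{uniformly in} $f\in S_{X^*}$, a spread $\tilde x\sim x_{i_0}$ inside a fixed finite window $[j,m]$ on which $f$ is nearly constant, so that $S(\tilde x)=0$ kills $f(\tilde x)$. Because the window $m$ is chosen before $f$, the norming functional of $\sum_{i\in F}a_ix_i$ can be used directly, and no limiting or ergodic argument is needed. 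This is both more elementary and logically prior to the weak-null/$\ell_1$ dichotomy that your approach would have to invoke.

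One smaller point: in (c) you describe the block basis produced from a non-boundedly-complete $(e_i)$ as ``equivalent to the summing basis of $c_0$''; what the argument actually yields (and what the paper produces via (a)) is a seminormalized block basis of differences $(y_{2i-1}-y_{2i})$ equivalent to the \emph{unit vector basis} of $c_0$, which is unconditional, unlike the summing basis.
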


\begin{proof}
For { $x,y\in X$ which are finitely supported} with respect to the basis $(e_i)$, we write $x\sim y$ if
$$x=\sum_{i=1}^k a_i e_{n_i}\ \ {\rm and}\ \ y=\sum_{i=1}^k a_i e_{m_i}\ \ {\rm where}\ \ n_1<\ldots n_k,\ m_1<\ldots<m_k.$$

a) Let $(x_i)$ be as in a). { We now need the following lemma.}

\begin{lem}\label{unconditionality lemma}
For all $\ep>0$ and $i_0\in\N$ there exists $m\in\N$ such that for all $f\in S_{X^*}$ there exists $\tilde x \in X$, $\tilde x\sim x_{i_0}$ and ${\rm supp}(\tilde x)\subseteq [j, m]$, $j=\min{\rm supp}(x_{i_0})$, so that $|f(\tilde x)|<\ep$.
\end{lem}
\begin{proof}
Let $\ep>0$ and $i_0\in\N$. Since $|f(e_i)|\le 1$ for any $f\in S_{X^*}$, by the pigeonhole principle there exists $m$ with the following property:

Let $j=\min{\rm supp}(x_{i_0})$. For all $f\in S_{X^*}$ there exists $\lambda\in[-1,1]$ and $F\subseteq [j, m]$ with $|F|=k=|{\rm supp}(x_{i_0})|$ so that for $i\in F$, $|f(e_i)-\lambda |<\ep/k.$

Place $\tilde x\equiv \sum_{i\in F}a_i e_i$ on $F$ so that $\tilde x\sim x_{i_0}$. Then $S(\tilde x)=S(x_{i_0})=0$ and 
$$|f(\tilde x)|\le |f(\tilde x-\lambda S(\tilde x))|+|\lambda S(\tilde x)|=\Big|\sum_{i\in F} a_i (f(e_i)-\lambda)\Big|<\ep.$$
\end{proof}

Now let $x=\sum_{i=1}^k a_i x_i$, $\|x\|=1$, $\ep>0$. Let $F\subseteq\{1, 2, \ldots, k\}$. We will show that $\|\sum_{i\in F}a_i x_i\|\le 1+\ep.$ Let $j_i=\min{\rm supp}(x_i)$ for $i\le k$ and choose $m_i$ by Lemma \ref{unconditionality lemma} for $\ep/k$ and $j_i$. Since $(e_i)$ is 1-spreading we may assume that $j_1<m_1<j_2<m_2<\ldots$. Let $f\in S_{X^*}$ with $f(\sum_{i\in F}a_i x_i)=\|\sum_{i\in F}a_i x_i\|$. For $i\not\in F$, $i\le k$, we choose $\tilde x_i\sim x_i$ with ${\rm supp}(\tilde x_i)\subseteq [j_i, m_i]$ so that $|f(\tilde x_i)|<\ep/k$. Then
$$\left\|\sum_{i\in F}a_i x_i\right\|\le \left|f\Big(\sum_{i\in F}a_i x_i+ \sum_{i\not\in F}a_i \tilde x_i\Big)\right|+\left|f\Big(\sum_{i\not\in F}a_i \tilde x\Big)\right|\le { \|x\|+\ep= }1+\ep.$$
This proves a).

b) To see $(d_i)$ is a basis for $X$ we need only note that it is basic. This is an easy calculation that holds for any difference sequence $(d_i)$ obtained from a normalized basic $(e_i)$ that dominates the summing basis (i.e., $S$ is bounded). Indeed, for any $n<m$
\begin{eqnarray*}
\Big\|\sum_{i=1}^n a_i d_i\Big\|&=&\Big\|\sum_{i=1}^{n-1}(a_i-a_{i+1})e_i+a_n e_n\Big\|\le \Big\|\sum_{i=1}^m a_i d_i\Big\|+\|a_{n+1}e_n\|\\
&=&\Big\|\sum_{i=1}^m a_i d_i\Big\|+\Big|S\Big(\sum_{i=n+1}^{m-1} (a_i-a_{i+1})e_i+a_me_m\Big)\Big|\le 2 \Big\|\sum_{i=1}^m a_i d_i\Big\|.
\end{eqnarray*}

 That $(d_i)$ is skipped unconditional follows from a).

c) We need only show that if $(e_i)$ is not boundedly complete, then $c_0\hookrightarrow X$. Suppose that there exists $(a_i)\subseteq \R$ so that $\sup_n \|\sum_{i=1}^n a_i e_i\|=1$ and $\sum_{i=1}^{\infty}a_i e_i$ diverges. Choose $\delta>0$ and a subsequence $(k_i)$ of $\N$ so that $\|x_i\|>\delta$ where $x_i=\sum_{j=k_i}^{k_{i+1}-1}a_i e_i$ for $i\in\N$. 

Choose a block sequence $(y_i)$ of $(e_i)$ so that $y_{2i-1}\sim x_i$ and $y_{2i}\sim x_i$ for all $i$. Then $(y_{2i-1})$ and $(y_{2i})$ are each equivalent to $(x_i)$ and $(y_{2i-1}-y_{2i})$ is unconditional by a). Furthermore
$$\sup_n\|\sum_{i=1}^n(y_{2i-1}-y_{2i})\|\le 2$$
and $2\ge \|y_{2i-1}-y_{2i}\|\ge \delta$ for all $i$. Thus $(y_{2i-1}-y_{2i})$ is equivalent to the unit vector basis of $c_0$.

d) This follows easily since $(d_i)$ is skipped unconditional.

e) Suppose $\ell_1$ does not embed into $X$. By Rosenthal's $\ell_1$ theorem \cite{R} and the fact that $(e_i)$ is spreading, $(e_i)$ is weak Cauchy.

Let $f\in X^*$. Then $f=w^*-\lim_n \sum_{i=1}^n f(e_i)e^*_i$, and $\lim_{i\to \infty}f(e_i)\equiv \lambda$ exists. Then $f-\lambda S\in [(e^*_i)]$. Indeed $f-\lambda S=w^*-\lim_{n\to\infty}\sum_{i=1}^n b_ie^*_i$ where $\lim_i b_i=0$. If the series is not norm convergent there exists $\delta>0$, $(n_i)\in [\N]^{\omega}$, and a normalized block basis $(x_i)$ of $(e_i)$ so that $x_1<e_{n_1}<x_2<e_{n_2}<\ldots$, so that $(f-\lambda S)x_i>\delta$ for all $i$ and $b_{n_i}\to 0$ rapidly. In particular, $(x_i - S(x_i)e_{n_i})$ is unconditional and $(f-\lambda S)(x_i- S(x_i)e_{n_i})>\delta/2$ for all $i$. Thus $(x_i-S(x_i)e_{n_i})$ is equivalent to the unit vector basis of $\ell_1$, a contradiction.

f) Let $(u_n)$ be a skipped block basis of $(d_i)$, and assume $c_0$ and $\ell_1$ do not embed into $X$. Then $(u_n)$ is unconditional and shrinking by b) and d) and is also boundedly complete since $X$ does not contain $c_0$. Thus $[(u_n)]$ is reflexive and Theorem \ref{skipped reflexive} yields the result.
\end{proof}

If $X$ has { an}  unconditional basis and $Y\subseteq X$ has non separable dual then $\ell_1\embed Y$ \cite{BP}. This also holds if $X$ has a spreading basis. In fact, the result holds more generally.

\begin{prop}\label{non separable dual}
Suppose $X$ { has} a skipped unconditional basis and let $Y\subseteq X$ with $Y^*$ not separable. Then $\ell_1$ embeds into $Y$.
\end{prop}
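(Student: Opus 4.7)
The plan is to adapt the classical Bessaga--Pe\l czy\'nski argument---which handles subspaces of spaces with an unconditional basis---to the skipped unconditional setting, by forcing a one-basis-vector gap between the supports of the block sequence we construct. First I would extract an uncountable, separated family of functionals whose restrictions to initial segments shrink to zero. Since $Y^*$ is non-separable, fix $\ep>0$ and an uncountable family $\{y^*_\alpha\}_{\alpha<\omega_1}\subseteq B_{Y^*}$ with $\|y^*_\alpha-y^*_\beta\|>3\ep$ for $\alpha\neq\beta$, and lift each to $x^*_\alpha\in B_{X^*}$ by Hahn--Banach. Since the dual of each $[e_1,\dots,e_n]$ is separable, a standard diagonal argument produces an uncountable subfamily on which $x^*_\alpha|_{[e_1,\dots,e_n]}$ converges in norm for each fixed $n$; subtracting the common limit, we may assume $\|x^*_\alpha|_{[e_1,\dots,e_n]}\|\to 0$ as $\alpha\to\infty$ for each $n$, while still $\|(x^*_\alpha-x^*_\beta)|_Y\|>3\ep$.

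Next, I would perform a gliding hump construction in $Y$ with a forced one-index gap. Inductively choose $\alpha_1<\beta_1<\alpha_2<\beta_2<\cdots$ from the refined family, integers $0=n_0<n_1<\cdots$, summable $\eta_k\downarrow 0$, and $y_k\in B_Y$ such that: (a) $\|x^*_{\alpha_k}|_{[e_1,\dots,e_{n_{k-1}+1}]}\|$ and $\|x^*_{\beta_k}|_{[e_1,\dots,e_{n_{k-1}+1}]}\|$ are less than $\eta_k$; (b) $y_k$ is within $\eta_k$ of a vector supported on $[e_{n_{k-1}+2},\dots,e_{n_k}]$; (c) $(x^*_{\alpha_k}-x^*_{\beta_k})(y_k)>2\ep$. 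At each step, $\alpha_k,\beta_k$ are chosen large enough for (a); the $3\ep$-separation provides some $w\in B_Y$ with $(x^*_{\alpha_k}-x^*_{\beta_k})(w)>2.5\ep$, which one then approximates in $Y$ by a vector $y_k$ whose basis expansion is concentrated on $[n_{k-1}+2,n_k]$, the truncation error being controlled via (a). The gap at index $n_{k-1}+1$ guarantees that, modulo the summable perturbation, $(y_k)$ is a skipped block basis of $(e_n)$ and hence suppression-unconditional by hypothesis.

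Now define the localized functionals $z^*_k(x):=(x^*_{\alpha_k}-x^*_{\beta_k})(P_{[n_{k-1}+2,n_k]}x)$, where $P_{[a,b]}$ is the natural basis projection. These are disjointly supported in $X^*$ with uniformly bounded norms, satisfy $z^*_k(y_k)>2\ep-O(\eta_k)$, and $|z^*_k(y_j)|=O(\eta_j)$ for $j\neq k$. For finite scalars $(a_k)$ and signs $\sigma_k=\operatorname{sgn}(a_k\,z^*_k(y_k))$, the signed sum $\phi=\sum_k\sigma_k z^*_k$ evaluates on $\sum_k a_k y_k$ to give a main term at least $2\ep\sum_k|a_k|$ up to summable errors; combined with a uniform bound $\|\phi\|\le C$, this yields $\|\sum_k a_k y_k\|\ge c\sum_k|a_k|$, showing $(y_k)$ is equivalent to the unit vector basis of $\ell_1$.

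The main obstacle is the uniform bound on $\|\phi\|$. In the classical unconditional-basis setting it is immediate from unconditionality of the biorthogonal basis in $X^*$; here one must evaluate $\phi(x)$ for $x\in B_X$ by decomposing $x$ into the skipped blocks $P_{[n_{k-1}+2,n_k]}x$ together with the gap terms $b_{n_{k-1}+1}e_{n_{k-1}+1}$, apply the skipped unconditional hypothesis to the signed sum of the skipped blocks, and use the decay property (a) of the $x^*_{\alpha_k}$'s and $x^*_{\beta_k}$'s on the gap coordinates to keep the total contribution uniformly bounded independently of the size of the support of $(a_k)$.
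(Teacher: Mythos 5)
The paper's proof is much shorter than your plan and relies on a different black box. Assume toward a contradiction that $\ell_1\not\hookrightarrow Y$. Since $Y^*$ is non-separable, Theorem~3.14 of Alspach--Judd--Odell produces an $\ell_1^+$ weakly null tree $(y_\alpha)_{\alpha\in T_\omega}$ in $Y$: the successors of each node are normalized and weakly null, yet every branch satisfies $\|\sum_i a_i y_{\alpha_i}\|\ge c\sum a_i$ for $a_i\ge 0$. Using the weak nullity of the successor sequences together with the skipped unconditionality of the basis of $X$, one extracts a branch which is (up to a small perturbation) a skipped block basis, hence unconditional; an unconditional $\ell_1^+$ sequence is equivalent to the unit vector basis of $\ell_1$, contradicting $\ell_1\not\hookrightarrow Y$.

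Your proposal attempts a direct Bessaga--Pe\l czy\'nski gliding hump instead, and the gap is exactly at the step you flag as the ``main obstacle,'' the uniform bound $\|\phi\|\le C$. I do not think the argument you sketch for it can work, even if $(e_n)$ is fully unconditional. The functional $\phi=\sum_k\sigma_k z^*_k$ superposes restrictions of infinitely many \emph{different} functionals $g_k=x^*_{\alpha_k}-x^*_{\beta_k}$ to disjoint intervals $I_k$. Skipped unconditionality controls $\|\sum_k\delta_k P_{I_k}x\|$ in terms of $\|x\|$, which would help if you needed to bound $g\bigl(\sum_k\delta_k P_{I_k}x\bigr)$ for a \emph{single} $g$; it gives no control over $\sum_k\sigma_k\,g_k(P_{I_k}x)$ when $g_k$ varies with $k$, and the decay in (a) only concerns coordinates \emph{before} $I_k$, which $P_{I_k}$ kills anyway. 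In fact, conditions (a)--(c) and the hump construction are perfectly consistent with $(y_k)$ being a $c_0$-like skipped block basis and $(g_k)$ being nearly biorthogonal to it, in which case $\|\phi\|$ must be unbounded because $\|\sum a_k y_k\|\gtrsim\sum|a_k|$ is simply false. What rules this scenario out is the non-separability of $Y^*$, but your argument invokes that hypothesis only once, to produce the initial separated family, and never feeds it back into the estimate for $\|\phi\|$; so as written, bounding $\|\phi\|$ is essentially equivalent to the conclusion you are trying to prove. The missing ingredient is Rosenthal's $\ell_1$ theorem (entering the paper's proof through [AJO]): one must assume $\ell_1\not\hookrightarrow Y$, pass to weakly Cauchy/weakly null objects, obtain the $\ell_1^+$ lower bound for \emph{those}, and only then let skipped unconditionality upgrade $\ell_1^+$ to $\ell_1$.
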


\begin{proof}
Assume that $Y^*$ is not separable and $\ell_1$ does not embed into $Y$. By Theorem 3.14 of \cite{AJO} there exists an $\ell^+_1$ weakly null tree $(y_{\alpha})_{\alpha\in T_{\omega}}$ in $Y$. Here $T_{\omega}=\{(n_i)_1^k: n_1<\ldots<n_k, n_i\in\N, k\in\N\}$. $(y_{(\alpha, n)})_n$ is weakly null and normalized for all $\alpha\in \{\emptyset\}\cup T_{\omega}$. Furthermore, for some $c>0$, $\|\sum_i a_i y_{\alpha_i}\|\ge c\sum_i a_i$ for all branches $(\alpha_i)$ of $T_{\omega}$ and $a_i\ge 0$. Using that the tree is weakly null and $X$ has a skipped unconditional basis it is easy to find a branch $(y_{\alpha_i})$ which is unconditional, hence is equivalent to the unit vector basis of $\ell_1$. This is a contradiction.
\end{proof}

%

\begin{remark}
The same proof also yields that if $X$ is a subspace of a space with skipped { unconditional finite dimensional decomposition} and $X^*$ is non-separable, then $\ell_1$ embeds into $X$.
\end{remark}

The next result answers a question asked of us by H. Rosenthal: If $X$ has a spreading basis, does $X$ contain a complemented subspace with an unconditional basis?

\begin{prop}\label{complemented unconditional subspace}
If $(e_i)$ is a normalized spreading basis for $X$ then the subspace $Y$ spanned by {the unconditional block} basis $[(e_{2n-1}-e_{2n})]$ is complemented in $X$.
\end{prop}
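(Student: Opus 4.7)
The plan is to realise the desired projection onto $Y=[(e_{2n-1}-e_{2n})]$ as $P=I-A$, where $A$ is the averaging projection associated with the partition $\N=\bigsqcup_n\{2n-1,2n\}$, i.e.\ the projection onto the span of the constant-coefficient pairs $(e_{2n-1}+e_{2n})$. On finitely supported vectors define
$$A\Bigl(\sum_i a_i e_i\Bigr)\;=\;\sum_n\frac{a_{2n-1}+a_{2n}}{2}\,(e_{2n-1}+e_{2n}).$$
A one-line calculation gives
$$(I-A)\Bigl(\sum_i a_i e_i\Bigr)\;=\;\sum_n\frac{a_{2n-1}-a_{2n}}{2}\,(e_{2n-1}-e_{2n})\;\in\;Y,$$
while any $y=\sum_n b_n(e_{2n-1}-e_{2n})\in Y$ satisfies $Ay=0$ and hence $(I-A)y=y$. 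So $P:=I-A$ is a linear projection of $X$ onto $Y$, and it will be bounded as soon as $A$ is.

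The boundedness of $A$ splits into the two cases coming from Rosenthal's dichotomy for spreading bases. If $(e_i)$ is weakly null, then it is automatically suppression $1$-unconditional (indeed subsymmetric), and the usual averaging argument for unconditional bases immediately gives $\|A\|\le 1$. If $(e_i)$ is not weakly null we are in the genuinely conditional case, and we invoke the averaging-projection theorem advertised in the introduction: for a conditional spreading basis the averaging projection onto the span of constant-coefficient blocks with no gaps between supports is bounded. The partition $\{1,2\},\{3,4\},\dots$ obviously produces blocks with no gaps between supports, so the theorem applies and yields $\|A\|<\infty$. Either way, $\|P\|\le 1+\|A\|<\infty$, proving that $Y$ is complemented in $X$ by $P$.

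The main obstacle in this approach is therefore not Proposition \ref{complemented unconditional subspace} itself, which is essentially a corollary, but the averaging-projection theorem used in the conditional case; that result is one of the main technical outputs of Section 2 and is presumably established just prior to this proposition. (As a side remark, the fact that the block basis $(e_{2n-1}-e_{2n})$ is itself unconditional, as the statement implicitly uses, is trivial in the weakly null case and, in the conditional case, follows from Theorem~\ref{spreading main}(a) applied to the block sequence $x_n=e_{2n-1}-e_{2n}$, each of which has $S(x_n)=0$.)
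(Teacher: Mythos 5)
Your proposal follows exactly the paper's route: realize the projection onto $Y$ as $I-Q$ where $Q$ is the averaging projection over the partition $\{1,2\},\{3,4\},\dots$, and invoke the averaging-projection theorem (Theorem~\ref{averaging projection}) for boundedness. Your case split between weakly null and conditional is unnecessary---the paper's averaging theorem covers both at once---and the exact bound $\|A\|\le 1$ in the weakly null case is not quite justified, but since only boundedness is needed this is harmless; the argument is essentially identical to the paper's.
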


It suffices to prove that the complementary ``projection" $Q$ is bounded where
$$Q(\sum_i a_i e_i)=\sum_i \frac{a_{2i-1}+a_{2i}}{2}(e_{2i-1}+e_{2i}).$$

This is a consequence of the following more general result which is well known if the basis is subsymmetric.

\begin{thm}\label{averaging projection}
Let $(e_i)$ be a normalized bimonotone 1-spreading basis for $X$. Let $(\sigma_j)_{j=1}^{\infty}$ be a partition of $\N$ into successive intervals, $\sigma_1<\sigma_2<\ldots$, with $|\sigma_j|=n_j$ for $j\in\N$. Then the averaging operator
$$Q\Big(\sum_i a_i e_i\Big)=\sum_{j=1}^{\infty}\Big(\big(\sum_{i\in\sigma_j}a_i\big)/n_j\Big)\Big(\sum_{i\in\sigma_j}e_i\Big)$$
is a bounded projection on $X$ with $\|Q\|\le 3$.
\end{thm}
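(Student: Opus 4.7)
The plan is to use a summation-by-parts identity to split $Q$ into two pieces, each of which can be controlled by the tools already developed. For $x=\sum_i a_i e_i$ (WLOG finitely supported with $x=\sum_{j\le N}x_j$), set $B_j:=S_{\sigma_j}(x)=\sum_{i\in\sigma_j}a_i$ and $\tilde B_j:=\sum_{k\le j}B_k=S_{[1,M_j]}(x)$ where $M_j:=\max\sigma_j$; bimonotonicity (in the form $\|S_I\|=1$ for every interval $I$) gives $|\tilde B_j|\le\|x\|$. Because $|S(u_j)|=n_j\le\|u_j\|\le n_j$, the constant-block vectors $v_j:=u_j/n_j$ are normalized, and $Q(x)=\sum_j B_j v_j$. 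Writing $B_j=\tilde B_j-\tilde B_{j-1}$ and summing by parts yields
\[
Q(x)\;=\;\tilde B_N\,v_N\;+\;\sum_{j=1}^{N-1}\tilde B_j\,(v_j-v_{j+1}).
\]
Idempotence $Q^2=Q$ follows immediately from $g_j(u_k)=n_k\delta_{jk}$.

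Since $\|\tilde B_N v_N\|\le\|x\|$, proving $\|Q\|\le 3$ reduces to showing that the telescoping remainder $y:=\sum_{j<N}\tilde B_j(v_j-v_{j+1})$ satisfies $\|y\|\le 2\|x\|$. The vector $y$ has revealing structure: each summand $v_j-v_{j+1}$ is supported on $\sigma_j\cup\sigma_{j+1}$ with $S(v_j-v_{j+1})=0$, and a direct computation using $S_{[1,M_k]}(v_j)=\mathbf{1}[j\le k]$ gives $S_{[1,M_k]}(y)=\tilde B_k$ for $k<N$ and $S_{[1,M_N]}(y)=0$. Thus $y$ matches the block-endpoint partial sums of $x$ everywhere except at $M_N$, while being constant with value $B_k/n_k$ on each $\sigma_k$ (a ``linear interpolation'' of $(\tilde B_k)$ between block boundaries).

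To obtain $\|y\|\le 2\|x\|$, I would fix a norming functional $f\in S_{X^*}$ with $f(y)=\|y\|$ and invoke Lemma~\ref{unconditionality lemma}: for each $j$, since $v_j-v_{j+1}$ has summing functional zero, one can find a spread copy of $v_j-v_{j+1}$ with arbitrarily small $f$-value; $1$-spreading of $(e_i)$ preserves the norm of $y$ under such substitutions, while Theorem~\ref{spreading main}a applied to the odd- and even-parity block bases $(v_{2k-1}-v_{2k})_k$ and $(v_{2k}-v_{2k+1})_k$ provides their suppression $1$-unconditionality. These ingredients together should let one compare $f(y)$ with an expression controlled by $\|x\|$, using the matching block-endpoint partial sums.

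The main obstacle is that one cannot simply bound $y$ by splitting the telescoping sum by parity of $j$: each of $\sum_k(v_{2k-1}-v_{2k})$ and $\sum_k(v_{2k}-v_{2k+1})$ can individually have norm of order $\sqrt{K}\,\|x\|$ (as happens in $J$ for $x=e_1$), while their sum telescopes to $v_1-v_N$ of norm at most $2$. Thus the cancellation between the two parities is essential and must be preserved by the argument; the natural resolution is to work with the full telescoping vector $y$ directly, using Lemma~\ref{unconditionality lemma} to transport the estimate to shifted configurations where $f$-values are small on individual summands while the global matching of partial sums $S_{[1,M_k]}(y)=\tilde B_k=S_{[1,M_k]}(x)$ controls the aggregate.
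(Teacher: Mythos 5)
Your summation-by-parts decomposition $Q(x)=\tilde B_N v_N+\sum_{j<N}\tilde B_j(v_j-v_{j+1})$ is correct, the bound $|\tilde B_j|\le\|x\|$ is right (via $\|S_I\|\le1$), and you have correctly and honestly identified the central obstacle: the parity split $\sum_k(v_{2k-1}-v_{2k})+\sum_k(v_{2k}-v_{2k+1})$ cannot be estimated termwise, since each half can blow up like $\sqrt{N}$ in $J$ while their sum telescopes. But identifying the obstacle is not the same as overcoming it, and the final paragraph ("these ingredients together should let one compare $f(y)$...", "the natural resolution is to work with the full telescoping vector $y$ directly...") is a wish, not an argument. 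Concretely: Lemma~\ref{unconditionality lemma} lets you move a \emph{single} mean-zero block to a far-away position where a fixed $f$ is small on it; but the $v_j-v_{j+1}$ overlap on $\sigma_{j+1}$, so you cannot spread them independently, and if you try to spread them \emph{coherently} you move the whole vector $y$ without changing its norm or $f(y)$ --- there is no mechanism here that exploits the cancellation between consecutive summands. You never produce an inequality $\|y\|\le 2\|x\|$, so the proof is genuinely incomplete, not merely terse.

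There is also a scope issue worth flagging. The theorem is stated for an arbitrary normalized bimonotone 1-spreading basis (including unconditional ones, where $S$ is unbounded), but your argument leans on $\|S_I\|=1$ and on Lemma~\ref{unconditionality lemma} and Theorem~\ref{spreading main}a, all of which presuppose the conditional (non-weakly-null) case. The unconditional case is classical so this is forgivable, but it shows your approach is inherently narrower than what is being proved.

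The paper takes a genuinely different and more elementary route. It writes $M=\operatorname{lcm}(n_1,\dots,n_k)$ and constructs $2M$ explicit spreads $y_1,\dots,y_{2M}$ of $x$ (by inserting $2n_j-1$ zeros after the $\sigma_j$-block of $x$ and cyclically shifting each block across its gap at the rate $m_j=M/n_j$) so that the average $\frac{1}{2M}\sum_{i=1}^{2M}y_i$ equals $\bar x+\sum_{j=1}^M z_j$, where $2\bar x$ is a spread of $Qx$ and each $z_j$ is a spread of $\frac{1}{2M}x$. Then $\|Qx\|=2\|\bar x\|\le 2\bigl(\|x\|+M\cdot\frac{1}{2M}\|x\|\bigr)=3\|x\|$, using only $1$-spreading and the triangle inequality. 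No summing functional, no unconditionality lemma, no parity decomposition: the cancellation you were trying to engineer analytically is built into the combinatorics of the averaging. If you want to salvage your route you would need a new idea to exploit the telescoping globally (perhaps an Abel-summation argument against the norming functional that never separates the two parities), but as written the proposal does not close.
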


It is important to note that, unlike the subsymmetric case, there are no gaps allowed between blocks in this averaging operator.

\begin{proof}
It suffices to prove that for all $k$, $\|Qx\|\le 3\|x\|$ if ${\rm supp}(x)\subseteq \bigcup_{i=1}^k\sigma_i$. Let $k\in\N$, $x=\sum_{j=1}^{\max(\sigma_k)}a_je_j$. Let $M$ be the least common multiple of $(n_1, n_2, \ldots, n_k)$ and set $m_j=M/n_j$ for $j\le k$. 

We will construct vectors $(y_i)_{i=1}^{2M}$ so that $\frac{1}{2M}\sum_{j=1}^{2M}y_j=\bar x+\sum_{j=1}^M z_j$ where $y_i\sim x$, $2\bar x\sim Qx$ and $z_j\sim\frac{1}{2M}x$ for $j\le M$. It follows that
$$\|Qx\|=2\|\bar x\|\le { 2}\Big(\|x\|+M\frac{1}{2M}\|x\|\Big){ =} 3\|x\|.$$

To begin we spread $x$ to obtain $y_1$ so that the coordinates of $y_1$ looks like this
$$y_1=(a_1, a_2, \ldots, a_{n_1}, 0, \ldots, 0, a_{n_1+1}, \ldots, a_{n_1+n_2}, 0,\ldots, 0, a_{n_1+n_2+1}, \ldots).$$

{  For each $1\leq j\leq k-1$, we insert $2n_j-1$ zeros between the blocks of $x$ corresponding to $\sigma_j$ and $\sigma_{j+1}$, and let
 $\gamma_j$ be the index set for the coordinates of the inserted block of zeros.
The vectors $y_2, \ldots, y_{2M}$ will be  spreads of $y_1$.} The position of the first block $(a_1, \ldots, a_{n_1})$ is preserved for $y_2,\ldots, y_{m_1}$. This block is then shifted one unit right for $y_{m_1+1}, \ldots, y_{2m_1}$. Then another unit to the right for $y_{2m_1+1}, \ldots, y_{3m_1}$ and so on { $n_1$ times until reaching $y_{2M}=y_{2n_1 m_1}$}. The same scheme is followed for the second block $(a_{n+1}, \ldots, a_{n_1+n_2})$ and the subsequent blocks. Thus the second block is preserved for $y_2,\ldots, y_{m_2}$ and then shifted once right for $y_{m_2+1},\ldots, y_{2m_2}$.

When we average the $y_j$'s, $\bar x$ will be the average { of the vectors $y_1,...,y_{2M}$ restricted to the coordinates given by the union over $1\le j\le k$ of the first $n_j$ coordinates of $\gamma_j$.}

We give a simple example in the diagram below explaining this averaging procedure in the case $k=2$, $n_1=2$, $n_2=3$ and so $M=6$, $m_1=3$, and $m_2=2.${ 

\[
\begin{array}{cccccccccccccc}
a_1 & {a_2} & 0 & 0 & 0  & a_3 & a_4 & {a_5} & 0 & 0 & 0 & 0 & 0\\
a_1 & {a_2} & 0 & 0 & 0  & a_3 & a_4 & {a_5} & 0 & 0 & 0 & 0 & 0\\
a_1 & {a_2} & 0 & 0 & 0  & 0 & a_3 & {a_4} & {\bf a_5} & 0 & 0 & 0 & 0\\
0 & { a_1} & {\bf a_2} & 0 & 0 & 0 & a_3 & {a_4} & {\bf a_5} & 0 & 0 & 0 & 0\\
0 & { a_1} & {\bf a_2} & 0  & 0 & 0 & 0 & { a_3} & {\bf a_4} & {\bf a_5} & 0 & 0 & 0\\
0 & { a_1} & {\bf a_2} & 0  & 0 & 0 & 0 & { a_3} & {\bf a_4} & {\bf a_5} & 0 & 0 & 0\\
0 & 0 & {\bf a_1} & {\bf a_2} & 0 & 0 & 0 & 0 & {\bf a_3} & {\bf a_4} & {\bf a_5} & 0 & 0\\ 
0 & 0 & {\bf a_1} & {\bf a_2} & 0 & 0 & 0 & 0 & {\bf a_3} & {\bf a_4} & {\bf a_5} & 0 & 0\\ 
0 & 0 & {\bf a_1} & {\bf a_2} & 0 & 0 & 0 & 0 & 0 & {\bf a_3} & {\bf a_4} & a_5 & 0\\
0 & 0 & 0 & {\bf a_1} & { a_2} & 0 & 0 & 0 & 0 & {\bf a_3} & {\bf a_4} & a_5 & 0\\
0 & 0 & 0 & {\bf a_1} & { a_2} & 0 & 0 & 0 & 0 & 0 &  {\bf a_3} & { a_4} & a_5\\
0 & 0 & 0 & {\bf a_1} & { a_2} & 0 & 0 & 0 & 0 & 0 &  {\bf a_3} & a_4 & a_5 \\ 
\\
\end{array}
\]

The vector $\bar x$ is the average of $y_1,...,y_{2M}$ restricted to the coordinates given by bold type.  The remaining coefficients are easily partitioned into $M$ spreads of $x$.}
\end{proof}

\begin{prop}\label{decomposition}
Let $(e_i)$ be a normalized conditional spreading basis for $X$. Let $D=[(d_{2n})]$, where $(d_n)$ is the difference basis. Then $X\simeq D\oplus Y$ where $Y=[(e_1+e_2, e_3+e_4, \ldots)]$ is isomorphic to $X$.
\end{prop}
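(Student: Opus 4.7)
My plan for proving the proposition has two parts. First, for the decomposition $X \simeq D \oplus Y$, I would apply Theorem~\ref{averaging projection} to the partition $\sigma_j = \{2j-1, 2j\}$ of $\mathbb N$ (all blocks of size two). The resulting bounded projection
\[
Q\Bigl(\sum_i a_i e_i\Bigr) = \sum_j \frac{a_{2j-1}+a_{2j}}{2}(e_{2j-1}+e_{2j})
\]
has $\|Q\| \le 3$ and image $Y$ (since $Q(v_j) = v_j$, where $v_j := e_{2j-1}+e_{2j}$). A short check identifies $\ker Q = D$: clearly $D \subseteq \ker Q$ because $Q(d_{2j}) = 0$; conversely, if $Qx = 0$ for $x = \sum a_i e_i$, then basicness of the block sequence $(v_j)$ forces $a_{2j-1}+a_{2j} = 0$ for all $j$, so $x = -\sum_j a_{2j-1}\,d_{2j} \in D$. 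Hence $X = Y \oplus D$.

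For $Y \simeq X$, the natural candidate is the linear map $T: X \to Y$ defined by $Te_n = v_n$. The upper bound $\|T\| \le 2$ is immediate from the triangle inequality and 1-spreading: $\|T(\sum c_n e_n)\| = \|\sum c_n e_{2n-1}+\sum c_n e_{2n}\| \le 2\|\sum c_n e_n\|$. For the inverse I would consider the operator $\Lambda: X \to X$ defined by $\Lambda(e_{2n-1}) = \Lambda(e_{2n}) = e_n/2$; a direct calculation gives $T\circ \Lambda = Q$, and for $y = \sum c_n v_n \in Y$ one has $\Lambda(y) = \sum c_n e_n$, so $\Lambda|_Y$ plays the role of $T^{-1}$ provided $\Lambda$ is bounded on $X$.

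The hard part will therefore be showing $\|\Lambda\| < \infty$, equivalently that $\|\sum c_n e_n\| \le C \|\sum c_n v_n\|$ for a universal $C$. My plan is to combine three ingredients: (i) the bounded interval-summing functionals $S_I$ (of norm at most one by bimonotonicity); applied with $I = [2a-1, 2b]$ they satisfy $S_I(\sum c_n v_n) = 2\sum_{k=a}^{b} c_k$, which immediately controls all partial sums of $(c_n)$; (ii) 1-spreading, which allows one to reposition vectors freely while preserving norms (for instance, to recognize $\sum c_n e_{2n-1}$ as an isometric copy of $\sum c_n e_n$); and (iii) the skipped unconditional structure of $(d_i)$ from Theorem~\ref{spreading main}(b), together with the observation that $\mu(x) := \sum_n \frac{a_{2n-1}+a_{2n}}{2}d_{2n}$ controls the "odd--even discrepancy" complementary to $Q(x)$. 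Dually, one may reformulate boundedness of $\Lambda$ as the statement that, for each $f = \sum f_n e_n^* \in X^*$, the pair-averaged functional $\tilde f := \tfrac{1}{2}\sum_n f_n(e_{2n-1}^* + e_{2n}^*)$ satisfies $\|\tilde f\|_{X^*} \le C\|f\|_{X^*}$; this dual form should be amenable to the tools above. Once this last estimate is in hand, $T$ is an isomorphism, so $Y \simeq X$, and together with the first step the proposition is proved.
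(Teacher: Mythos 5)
Your first half is correct and matches the paper exactly: apply Theorem~\ref{averaging projection} to the partition $\sigma_j=\{2j-1,2j\}$ to get a bounded projection $Q$ onto $Y$ with $\ker Q=D$, so $X=Y\oplus D$. You also correctly identify the remaining task, namely to prove that $(v_n)=(e_{2n-1}+e_{2n})$ dominates $(e_n)$ (the reverse domination $\|T\|\le 2$ being trivial by the triangle inequality and $1$-spreading). This is precisely the reduction the paper makes.

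However, the domination estimate itself --- the only genuinely hard point in the proposition --- is not proved in your proposal. You declare ``the hard part will therefore be showing $\|\Lambda\|<\infty$'' and then list three ``ingredients'' without combining them into an argument. They do not obviously suffice: the interval-summing functionals $S_I$ control only the partial sums $\sum_{k\le b}c_k$, which for a conditional basis (think of James' space) is far weaker information than the norm; $1$-spreading by itself says nothing about comparing $\|\sum c_n e_n\|$ with $\|\sum c_n v_n\|$; and the role of $\mu(x)$ and the ``odd--even discrepancy'' is never made precise. The dual reformulation at the end is again just a restatement, not a proof. What is missing is the actual mechanism that makes the estimate go through. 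The paper's proof is a short symmetry/pigeonhole argument: spread $x=\sum a_i e_i$ into three copies $x_1=\sum a_i e_{3i-1}$, $x_2=\sum a_i e_{3i-2}$, $x_3=\sum a_i e_{3i}$, all of norm $1$, and note that each pairwise sum $x_p+x_q$ is a spread of $\sum a_i(e_{2i-1}+e_{2i})$, hence all have the same norm $c$. Taking $f\in S_{X^*}$ with $f(x_1)=1$ forces $f(x_2),f(x_3)\le c-1$, and then $c\ge -f(x_2+x_3)\ge 2-2c$, giving $c\ge 2/3$. None of your listed tools encodes this idea, so as written there is a real gap: the crucial lower bound $\|\sum c_n v_n\|\gtrsim\|\sum c_n e_n\|$ is asserted as the goal but never established.
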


\begin{proof}
We may assume $(e_i)$ is 1-spreading. By Proposition \ref{complemented unconditional subspace} and Theorem \ref{averaging projection} it suffices to prove that $(e_{2n-1}+e_{2n})_{n=1}^{\infty}$  { dominates} $(e_n)$. { We will} prove that if $x=\sum_{i=1}^n a_i e_i$, $\|x\|=1$, then $\|\sum_{i=1}^n a_i (e_{2i-1}+e_{2i})\|\ge 2/3$. Write $x_1=\sum_{i=1}^n a_i e_{3i-1}$, $x_2=\sum_{i=1}^n a_i e_{3i-2}$ and $x_3=\sum_{i=1}^n a_i e_{3i}$. Assume $\|x_1+x_2\|=c$. Let $f\in S_{X^*}$, $1=f(x_1)$. Then $f(x_1+x_2)\le c$ so $f(x_2)\le c-1$. Also using $\|x_1+x_3\|=c$, $f(x_3)\le c-1$. Thus $c\ge -f(x_2+x_3)\ge 2-2c$ and so $c\ge 2/3$. 
{ Thus, $\|\sum_{i=1}^n a_i (e_{2i-1}+e_{2i})\|=\|\sum_{i=1}^n a_i (e_{3i-2}+e_{3i-1})\|=c\ge2/3$.}
Note that the argument can easily be generalized for all $\ep>0$ to get $c\ge 1-\ep$. 
\end{proof}

It has been shown that spaces $X$ whose dual are isomorphic to $\ell_1$ are quite plentiful and need not contain $c_0$ \cite{BD}. Moreover, any $Y$ with $Y^*$ separable embeds into such a space \cite{FOS}. But if $X$ has a spreading basis, $X^*$ is separable and $\ell_1\embed X^*$, then $c_0\embed X$. This holds more generally if $X^*$ is separable and $X^{**}$ is not separable, assuming a spreading basis, by Theorem \ref{spreading main}.  More can be said if $X^*$ is isomorphic to $\ell_1$.

\begin{thm}\label{spreading l_1 dual}
Let $(e_i)$ be a normalized spreading basis for $X$ and assume $X^*$ is isomorphic to $\ell_1$. Then $(e_i)$ is equivalent to either the unit vector basis of $c_0$ or the summing basis.
\end{thm}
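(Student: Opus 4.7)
The plan is to split on whether $(e_i)$ is weakly null, and in each case produce an unconditional basis for $X$ so that the Lindenstrauss--Pe\l czy\'nski uniqueness theorem for the unconditional basis of $\ell_1$ can be invoked at the dual level.

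\textbf{Case 1: $(e_i)$ is weakly null.} As noted in the introduction, a weakly null spreading basic sequence is automatically suppression unconditional, so $(e_i)$ is already an unconditional basis of $X$. Since $X^*\cong\ell_1$ is separable we have $\ell_1\not\hookrightarrow X$, and hence by James' theorem the unconditional basis $(e_i)$ is shrinking. Therefore $(e_i^*)$ is an unconditional basis of $X^*\cong\ell_1$, and by Lindenstrauss--Pe\l czy\'nski it is equivalent to the unit vector basis of $\ell_1$; a routine duality computation then gives that $(e_i)$ is equivalent to the unit vector basis of $c_0$.

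\textbf{Case 2: $(e_i)$ is not weakly null.} Then the summing functional $S$ is bounded, and separability of $X^*\cong\ell_1$ again gives $\ell_1\not\hookrightarrow X$. Consider the difference basis $(d_i)=(e_1,e_2-e_1,e_3-e_2,\ldots)$, which is a shrinking basis of $X$ by Theorem~\ref{spreading main}(b),(d). Since $S(d_i)=0$ for $i\ge 2$, Theorem~\ref{spreading main}(a) applied to the normalized block basis $(d_i/\|d_i\|)_{i\ge 2}$ of $(e_i)$ shows that $(d_i)_{i\ge 2}$ is suppression $1$-unconditional. Attaching $d_1$ back costs only a bounded factor, since the projection onto $[d_1]$ along $[d_i]_{i\ge 2}$ is bounded ($(d_i)$ being a basis), so $(d_i)$ is itself an unconditional basis for $X$. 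A second application of Lindenstrauss--Pe\l czy\'nski, now to the unconditional basis $(d_i^*)$ of $X^*\cong\ell_1$, yields that $(d_i^*)$ is equivalent to the unit vector basis of $\ell_1$, and hence $(d_i)$ is equivalent to the unit vector basis of $c_0$. Since $e_n=d_1+\cdots+d_n$, this is precisely the statement that $(e_i)$ is equivalent to the summing basis.

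The real content lies in choosing the right unconditional basis for $X$. In Case 2 the sequence $(e_i)$ itself cannot be unconditional: spreading together with a bounded nonzero summing functional and unconditionality would force $(e_i)$ to be equivalent to the $\ell_1$ unit vector basis, contradicting $\ell_1\not\hookrightarrow X$. This is exactly why one moves to the difference basis, where Theorem~\ref{spreading main}(a) provides $1$-unconditionality on the tail $d_2,d_3,\ldots$ precisely because the summing functional vanishes there; the single extra vector $d_1$ then costs only a basis constant. Once an unconditional basis for $X$ has been produced, the Lindenstrauss--Pe\l czy\'nski uniqueness theorem closes both cases uniformly.
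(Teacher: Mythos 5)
Your Case 1 is essentially the paper's argument (the paper phrases the uniqueness step via subsymmetry of $(e_i^*)$, but that is a cosmetic difference). The problem is in Case 2, in the step where you claim $(d_i)_{i\ge 2}$ is suppression $1$-unconditional by Theorem~\ref{spreading main}(a).

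Theorem~\ref{spreading main}(a) applies to a normalized \emph{block basis} $(x_i)$ of $(e_i)$ with $S(x_i)=0$, and a block basis by definition has consecutive disjoint supports. The differences $d_i=e_i-e_{i-1}$ ($i\ge 2$) do satisfy $S(d_i)=0$, but their supports overlap: $\supp(d_i)=\{i-1,i\}$ and $\supp(d_{i+1})=\{i,i+1\}$ share the index $i$. So $(d_i)_{i\ge 2}$ is \emph{not} a block basis of $(e_i)$, and part (a) cannot be invoked. This is exactly why the paper only proves that the difference sequence is \emph{skipped} unconditional (part (b)): one needs to leave gaps before the supports become consecutive blocks, which is what happens for $(d_{2n})$ (supports $\{1,2\},\{3,4\},\dots$ are disjoint consecutive intervals). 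Your final step ("attaching $d_1$ costs only a basis constant") is fine, but it is predicated on the unconditionality of the tail, which your argument does not establish.

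The paper circumvents this by working with the genuine block basis $(d_{2n})$, noting $(d_{2n})\sim(d_{2n-1})$ so it suffices to show $(d_{2n})\sim c_0$ basis, and then using Theorem~\ref{averaging projection} (via Proposition~\ref{complemented unconditional subspace}) to get that $D=[(d_{2n})]$ is \emph{complemented} in $X$. Then $D^*$ is an infinite-dimensional complemented subspace of $\ell_1$, hence isomorphic to $\ell_1$ by Pe\l czy\'nski's theorem, and uniqueness of the unconditional basis of $\ell_1$ finishes. Note that the complementation step is genuinely needed: without it you would only know that $(d_{2n}^*|_D)$ is an unconditional basis of $D^*$, not that $D^*\cong\ell_1$. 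To repair your argument you would therefore need both to replace $(d_i)_{i\ge 2}$ by the block basis $(d_{2n})$ and to add the complementation step before appealing to uniqueness of the $\ell_1$ basis.
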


\begin{proof}
If $(e_i)$ is weakly null then, it is unconditional. It follows that $(e^*_i)$ is subsymmetric. Since $X^*\simeq \ell_1$ some subsequence of $(e^*_i)$ is equivalent to the unit vector basis of $\ell_1$, so $(e^*_i)$ is such and so $(e_i)$ is equivalent to the unit vector basis of $c_0$.

If $(e_i)$ is not weakly null,  then we consider the difference basis $(d_i)$ of $X$. To show $(e_i)$ is equivalent to the summing basis it suffices to show that $(d_i)$ is equivalent to the unit vector basis of $c_0$. To do this, it suffices, by the triangle inequality, to show $(d_{2i})$ is equivalent to the unit vector basis of $c_0$ since $(d_{2i})$ is equivalent to $(d_{2i-1})$. Now $D=[(d_{2n})]$ is complemented in $X$ and $(d_{2n})$ is unconditional and shrinking. So $(d^*_{2n}|_D)$ is an unconditional basis for $D^*$ which is isomorphic to $\ell_1$, since it is complemented in $X^*\simeq \ell_1$. Thus $(d^*_{2n}|_D)$ is equivalent to the unit vector basis of $\ell_1$. These are due to the facts that $\ell_1$ is prime and has unique unconditional basis. {Hence} $(d_{2n})$ is equivalent to the unit vector basis of $c_0$.
\end{proof}

\section{Remarks on conditional spreading models}
Recall that a normalized basic sequence $(e_i)$ is a spreading model of a sequence $(x_i)$ if for some $\ep_n\downarrow 0$, for all $n$, $(a_i)_1^n\subseteq [-1, 1]$ positive integers $n\le k_1<\ldots<k_n$

\begin{equation}\label{sp model}
\left | \Big\|\sum_{j=1}^n a_j x_{k_j}\Big\|-\Big\|\sum_{i=1}^n a_i e_i\Big\|\right |\le \ep_n.
\end{equation}

In this case $(e_i)$ is 1-spreading, and 
if $(x_i)$ is weakly null, then $(e_i)$ is suppression 1-unconditional.  We denote by $SP_w(X)$ the set of all spreading models of $X$ generated by weakly null sequences. If $(y_i)$ is normalized basic then, via Ramsey theory, some subsequence $(x_i)$ of $(y_i)$ generates a spreading model $(e_i)$ as in (\ref{sp model}) above. 
{ If $(y_i)$ is normalized but does not have a basic subsequence then any basic spreading model admitted by $(y_i)$ must be} equivalent to the unit vector basis of $\ell_1$.  Indeed,  by Rosenthal's $\ell_1$ theorem we may assume $(y_i)$ is weak Cauchy. { Every} non-trivial weak Cauchy sequence { has } a basic subsequence (see the proof of Proposition 2.2, \cite{R1}). Thus a subsequence $(x_i)$ of $(y_i)$ weakly converges to a nonzero element $x_0$, and $(x_i-x_0)$ generates an unconditional spreading model $(u_i)$. So $(e_i)$ is equivalent to $(x_0+u_i)$ in $\langle x_0\rangle\oplus [(u_i)]$. Since $(e_i)$ is basic, $(u_i)$ is not weakly null and therefore equivalent to the unit vector basis of $\ell_1$, and so is $(e_i)$. 


One of the questions of interest about spreading models is whether there exists a ``small'' space that is universal for all (or a large class of) spreading models. Recall that the space $C(\omega^{\omega})$ is universal for all unconditional spreading models, that is, every subsymmetric basic sequence is a spreading model of $C(\omega^{\omega})$ \cite{O}.  In \cite{AM} a remarkable example of a {\em reflexive} space is constructed so that {\em every infinite dimensional subspace} of it is universal for all unconditional spreading models. For the case of conditional spreading models, S. A. Argyros raised the following which partly motivated our study of conditional spreading sequences above.
\begin{prob}\label{Argyros' question}
Let $(e_i)$ be a conditional normalized spreading sequence. Does there exists a quasi-reflexive of order 1 space $X$ with a normalized basis $(x_i)$ which generates $(e_i)$ as a spreading model? 
\end{prob} 

{ We show that the answer is affirmative for the summing basis of $c_0$.} For a given basis $(e_i)$, recall the space $J(e_i)$. For $x\in J(e_i)$, the norm is given by
$$\|x\|=\sup\Big\{\Big\|\sum_{i=1}^k s_i(x)e_{p_i}\Big\|: s_1<s_2<\ldots<s_k\ {\rm { are\ } intervals\ in\ }\N,\ \min s_i=p_i\Big\},$$ where $s_i(x)=\sum_{j\in s_i}a_j$, $s_i=[p_i, q_i)$, and $x=(a_j)$. 
\begin{prop}\label{summing basis}
Let $(e_i)$ be the unit vector basis of the dual Tsirelson space $T^*$. Then the space $J(e_i)$ is quasi-reflexive of order 1 and the spreading model generated by its natural basis is equivalent to the summing basis { of $c_0$}.
\end{prop}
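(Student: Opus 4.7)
The plan is to establish both assertions about $X := J(T^*)$ by unpacking the $J$-norm formula and using the fact that the spreading model of $T^*$ is equivalent to the unit vector basis of $c_0$. Write $(x_i)$ for the natural basis of $X$.

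For the spreading-model assertion, fix $n$ and integers $n \le k_1 < k_2 < \cdots < k_n$ with $k_1$ large, put $x = \sum_{i=1}^n a_i x_{k_i}$, and let $S_\ell = a_1 + \cdots + a_\ell$. Unwinding the $J$-norm, each nonzero $s_j(x)$ (with $s_1 < \cdots < s_m$ disjoint intervals and $p_j = \min s_j$) is a consecutive block sum $S_{v_j}-S_{u_j}$, so $|s_j(x)| \le 2\max_\ell|S_\ell|$. Choosing the single interval $s_1 = [k_1, k_\ell]$ gives the lower bound $\|x\|_X \ge \|S_\ell\, e_{k_1}\|_{T^*} = |S_\ell|$, hence $\|x\|_X \ge \max_\ell|S_\ell|$. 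For the matching upper bound I would discard intervals disjoint from $\{k_1,\ldots,k_n\}$; the remaining indices then satisfy $p_j \ge k_1 + 1$ for $j \ge 2$, so Tsirelson admissibility applied to the singletons $\{p_2\},\ldots,\{p_m\}$ (valid because $m - 1 \le k_1 \le p_2$) yields $\sum_{j\ge 2}|y_{p_j}| \le 2\|y\|_T$ for every $y\in T$. Dualizing and accounting separately for the $p_1$ term gives $\|\sum_j c_j e_{p_j}\|_{T^*} \le 3\max_j|c_j|$, whence $\|x\|_X \le 6\max_\ell|S_\ell|$. Thus the spreading model of $(x_i)$ is equivalent to the summing basis of $c_0$.

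For quasi-reflexivity, pass to a subsequence along which $(x_i)$ generates its spreading model and renorm as described at the start of Section~2, so that the resulting basis is 1-spreading and bimonotone. The summing functional $S$ is bounded on $X$ (a single large interval in the $J$-norm yields $|S(x)| \le \|x\|_X$), so $(x_i)$ is conditional spreading and Theorem~\ref{spreading main}(f) reduces the claim to showing $c_0, \ell_1 \not\embed X$. Since $T^*$ is reflexive, it contains neither $c_0$ nor $\ell_1$, and standard James-type sliding-hump arguments applied to the $J$-norm show that the natural basis of $X$ is boundedly complete and that its difference basis $(d_i)$ is shrinking; by Theorem~\ref{spreading main}(c) and (d) we conclude $c_0, \ell_1 \not\embed X$, as required.

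The main obstacle is the uniform upper estimate $\|\sum_j c_j e_{p_j}\|_{T^*} \le C\max_j|c_j|$ for spread $p_j$'s, independent of both the number of terms and the possibly small value of $p_1$; this is the quantitative content of ``$T^*$ has $c_0$ spreading model'' and is handled by the Tsirelson admissibility argument sketched above, which isolates the (trivially controlled) first coordinate and bounds the $\ell_1$-mass at the remaining spread indices.
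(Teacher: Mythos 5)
Your treatment of the spreading model is essentially the paper's argument, fleshed out with the Tsirelson admissibility computation made explicit (the paper states the $T^*$ estimate in one line and calls it ``easy to see''). The lower bound via a single interval and the upper bound by isolating the first interval-position $p_1$ and applying admissibility to the singletons $\{p_2\},\dots,\{p_m\}$ (valid because the number of nonzero blocks is at most $n\le k_1\le p_2$) is correct and matches the intent of the paper's estimate $\|\sum_j s_j(x)e_{p_j}\|_{T^*}\le 2\max_j|s_j(x)|$.

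The quasi-reflexivity argument has a genuine gap. Theorem~\ref{spreading main} and its parts (c), (d), (f) all require $(e_i)$ to be a conditional \emph{spreading} basis of the space under consideration. The natural basis $(u_i)$ of $J(T^*)$ is not spreading: the $J$-norm places the interval sums $s_j(x)$ at the left endpoints $p_j=\min s_j$ inside $T^*$, and $T^*$ is heavily position-dependent (that is the whole point of Tsirelson space), so translating the support of a vector changes its norm. Your fix --- ``pass to a subsequence along which $(x_i)$ generates its spreading model and renorm'' --- does not repair this: replacing $(x_i)$ by its spreading model produces a different Banach space, not an equivalent renorming of $X=J(T^*)$. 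Worse, the spreading model you have just computed is equivalent to the summing basis of $c_0$, so the spreading-model space is $c_0$, which is certainly not quasi-reflexive of order~$1$; applying Theorem~\ref{spreading main}(f) there would give the opposite conclusion. Finally, the substantive fact that the natural basis of $J(e_i)$ with $(e_i)$ reflexive is boundedly complete, the difference basis is shrinking, and skipped blockings span reflexive subspaces is not ``standard sliding hump'' folklore that can be waved at; it is precisely the content of \cite{BHO}, which the paper cites to dispose of the quasi-reflexivity claim in one line. You should do the same, or else give the argument of \cite{BHO} rather than route through the spreading-basis machinery that does not apply to $J(T^*)$.
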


\begin{proof}
In \cite{BHO} it is shown that if $(e_i)$ is a basis of a reflexive space, then $J(e_i)$ is quasi-reflexive of order 1. Thus the first assertion follows since $T^*$ is reflexive.

Also it is easy to see that any subsequence of the basis $(u_i)$ of $J(e_i)$ generates a spreading model equivalent to the summing basis $(s_i)$. Indeed, to estimate the norm of a vector $x=\sum_{j=1}^k a_j u_{i_j}$ where $k\le i_1<\ldots<i_k$ note that for an arbitrary $s_1<\ldots<s_k$ we have

$$\Big\|\sum_{j=1}^k s_j(x)e_{i_j}\Big\|_{T^*}\le 2\max_{j}|\sum_{i\in s_j}a_i|$$ and the latter expression is at most twice the summing norm of $x$. The reverse inequality is trivial (consider intervals $s=[l, i_k],  k\le l\le i_k$).
\end{proof}

In a follow-up work \cite{AMS} the constructions similar to the above are studied in more detail and, in particular, Problem \ref{Argyros' question} is solved affirmatively.

\section{Spreading and asymptotic models}

Our first result of { this} section is a strengthening of the $c_0$-part of the following theorem of Odell and Schlumprecht \cite{OS}. {\em If $X$ has a basis $(x_i)$ so that every spreading model of a normalized block basis of $(x_i)$ is 1-equivalent to the unit vector basis of $c_0$ (respectively, $\ell_1$), then $X$ contains an isomorphic copy of $c_0$ (respectively, $\ell_1$).}  Here we show that it is sufficient to restrict the assumption to those spreading models generated by weakly null block bases.

\begin{thm}\label{c_0 spreading models}
Let $(x_i)$ be a normalized weakly null basis for $X$. Assume that $\ell_1$ does not embed into $X$ and whenever $(y_i)$ is a normalized weakly null block basis of $(x_i)$ with spreading model $(e_i)$, then $\|e_1-e_2\|=1$. Then some subsequence of $(x_i)$ is equivalent to the unit vector basis of $c_0$.
\end{thm}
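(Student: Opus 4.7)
The strategy has two phases: first strengthen the hypothesis from $\|e_1 - e_2\|=1$ to full 1-equivalence with the unit vector basis of $c_0$ for every weakly null spreading model, then extract a $c_0$-equivalent subsequence of $(x_i)$ using $\ell_1 \not\embed X$.

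\textbf{Phase 1 (iteration).} Fix a normalized weakly null block basis $(y_j)$ of $(x_i)$ with spreading model $(e_j)$; recall that $(e_j)$ is 1-spreading and 1-unconditional. It suffices to show $\|\sum_{i=1}^n e_i\|=1$ for all $n$, since with 1-unconditionality this yields 1-equivalence to the $c_0$ basis via the standard averaging-over-signs argument. I proceed by doubling induction. Assume $\|\sum_{i=1}^{2^k}e_i\|=1$. The block basis $u_j = y_{(j-1)2^k+1}+\cdots + y_{j\cdot 2^k}$ is weakly null (sums of weakly null sequences) with $\|u_j\|\to 1$ by the spreading-model property, so applying the hypothesis to the spreading model of $(u_j/\|u_j\|)$ gives
\[
\Bigl\|\sum_{i=1}^{2^k} e_i - \sum_{i=2^k+1}^{2^{k+1}} e_i\Bigr\|=1.
\]
Consequently the difference block $v_j = u_{2j-1} - u_{2j}$ is again a normalized weakly null block basis (with $\|v_j\|\to 1$), and applying the hypothesis to its spreading model gives the analogous equality on four consecutive $2^k$-blocks $A,B,C,D$: $\|A-B-C+D\|=1$. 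Suppressing to the middle $2^{k+1}$ coordinates (both $B$ and $C$ carry coefficient $-1$) and invoking 1-spreading yields $\|\sum_{i=1}^{2^{k+1}} e_i\|\le 1$, which combined with the trivial lower bound closes the induction.

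\textbf{Phase 2 (extraction).} By Phase 1, every subsequence of $(x_i)$---which is automatically a weakly null block basis---has a spreading model 1-equivalent to the unit vector basis of $c_0$. Using $\ell_1\not\embed X$ (so that Rosenthal's theorem supplies weak-Cauchy extractions) together with a Ramsey/stabilization argument, pass to a subsequence along which the spreading-model convergence is uniform over each compact family of coefficients. A refined diagonal extraction, guided by a combinatorial gap/refinement device that chooses the indices $n_1<n_2<\cdots$ to grow fast enough relative to the stabilization data, then produces a subsequence $(x_{n_i})$ whose finite linear combinations are uniformly $c_0$-controlled.

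\textbf{Main obstacle.} Phase 2 is the harder half. The spreading-model property only controls $\|\sum_{i=1}^k a_i x_{n_i}\|$ in the asymptotic regime $k\le n_1$, while $c_0$-equivalence requires a uniform bound over every finite subset, including combinations involving the earliest indices of the subsequence. Bridging this gap calls for a delicate combinatorial extraction (a Gap-Lemma-type argument or a Bourgain-style tree/index analysis), and the hypothesis $\ell_1\not\embed X$ is essential here to prevent pathological growth of initial-segment norms, so that the stabilized asymptotic $c_0$-behavior actually propagates to all finite combinations along the extracted subsequence.
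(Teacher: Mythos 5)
Phase~1 of your proposal is essentially the paper's own preliminary remark: starting from $\|e_1-e_2\|=1$, iterated use of the hypothesis on normalized differences of consecutive blocks, together with $1$-spreading and suppression $1$-unconditionality, yields $\|\sum_{i=1}^n\pm e_i\|=1$ for all $n$, hence $1$-equivalence with the $c_0$ basis. Your doubling induction is a correct way to say this.

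Phase~2 is where the theorem actually lives, and there you do not have a proof---you have named the obstacle and gestured at ``a Gap-Lemma-type argument or a Bourgain-style tree/index analysis'' without supplying one. The gap is genuine: the spreading-model hypothesis only gives a uniform bound on $\|\sum_{i\in E}x_{n_i}\|$ when $|E|\le n_1$, and no amount of ``choosing $n_1<n_2<\cdots$ to grow fast enough'' can convert this into a bound for arbitrary $E$, because the offending combinations always begin at the very start of the putative subsequence. The paper closes this gap with a genuinely different device: a transfinite induction over countable ordinals. One first invokes Elton's near-unconditionality theorem to pass to a subsequence on which $\|x_E\|\le C\|x_F\|$ for $E\subseteq F$, then, using two lemmas, one builds for each $\alpha<\omega_1$ a thin family $\G_\alpha\subseteq[\N]^{<\omega}$ that is large in some $N_\alpha$, has $\sup\{\|x_E\|:E\in\G_\alpha\}$ close to $1$, and has Cantor--Bendixson index $CB(\G_\alpha^I)\ge\omega^\alpha$. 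The successor step (Lemma~\ref{induction step}) uses the Ramsey property of open subsets of $[\N]^\omega$ together with the $1$-equivalence to $c_0$ of all relevant spreading models to stabilize $\|x_E\|$ on unions of $k$ members of $\F$; the preliminary Lemma~\ref{weakly null} uses Rosenthal's $\ell_1$-theorem---this is where $\ell_1\not\embed X$ actually enters, not as a bound on initial segments as you suggest---to guarantee that the block sums $(x_{E_i})$ are weakly null so that the hypothesis applies. Finally, since a countable compact metric space has countable Cantor--Bendixson index, the existence of $\G_\alpha^I$'s of index $\ge\omega^\alpha$ for \emph{all} $\alpha<\omega_1$ forces the closure of $\{1_E:\|x_E\|\le 2C\}$ in $\{0,1\}^\N$ to contain the indicator of an infinite set $N$, and Elton's estimate then converts boundedness of the partial sums $\sum_{i\in N, i\le n}x_i$ into $c_0$-equivalence of $(x_i)_{i\in N}$. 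None of this ordinal-index machinery appears in your Phase~2, and without it (or an equivalent replacement) the extraction step is unproved.
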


\begin{rem}
The hypothesis yields that every spreading model $(e_i)$ generated by a weakly null normalized sequence $(y_i)$ is 1-equivalent to the unit vector basis of $c_0$. Indeed, we may assume $(y_i)$ is a weakly null normalized block basis of $(x_i)$. Then $\left(\frac{y_{2n-1}-y_{2n}}{\|y_{2n-1}-y_{2n}}\| \right)$ is a weakly null block basis generating the normalized spreading model $(e_{2n-1}-e_{2n})$ and so $\|e_1-e_2-e_3+e_4\|=1$. By iteration of this argument, 1-spreading and the suppression 1-unconditionality of $(e_i)$, 
$$\big\|\sum_{i=1}^n \pm e_i\big\|=1\ \text{for all}\ \pm 1\ \text{and all}\ n.$$
This implies $(e_i)$ is 1-equivalent to the unit vector basis of $c_0$.
\end{rem}

As it was pointed out in { the introduction} this immediately implies  { the }following well known theorem of Elton and Odell \cite{EO}.

\begin{thm}[Elton-Odell]\label{Elton-Odell} Let $X$ be an infinite dimensional Banach space. Then there exists $\lambda>1$ and an infinite sequence $(x_i)\subset S_X$ such that $\|x_i-x_j\|\ge \lambda$ for all $i\neq j$.
\end{thm}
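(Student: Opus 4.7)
The plan is to follow the road map sketched in the introduction and use Theorem \ref{c_0 spreading models} as the main engine, after disposing of the case when $X$ contains a copy of $c_0$ or $\ell_1$.

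First I would handle the cases $c_0\hookrightarrow X$ or $\ell_1\hookrightarrow X$ by a direct construction in the model space combined with James' non-distortion theorem. Concretely, the unit vector basis of $\ell_1$ is $2$-separated, and in $c_0$ the vectors $x_i=\sum_{k\le i}e_k-e_{i+1}$ are normalized and pairwise $2$-separated; non-distortability of $c_0$ and $\ell_1$ transfers such a sequence into $X$ with arbitrarily small loss, producing an infinite $(1+\ep)$-separated sequence in $S_X$ for some $\ep>0$.

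For the remaining (and essential) case $c_0\not\hookrightarrow X$ and $\ell_1\not\hookrightarrow X$, I would first produce a normalized weakly null sequence $(x_i)$ in $X$: Mazur's theorem yields a normalized basic sequence, Rosenthal's $\ell_1$-theorem (applicable since $\ell_1\not\hookrightarrow X$) gives a weak Cauchy subsequence, and either subtracting the weak limit or taking normalized consecutive differences produces a normalized weakly null sequence, which after passing to a subsequence may be assumed basic. Now Theorem \ref{c_0 spreading models} applies: since $c_0\not\hookrightarrow X$, no subsequence of $(x_i)$ is equivalent to the unit vector basis of $c_0$, so the conclusion of that theorem fails. The hypothesis $\ell_1\not\hookrightarrow X$ is granted, so the only hypothesis that can fail is the spreading-model condition, and therefore there exists a normalized weakly null block basis $(y_i)$ of $(x_i)$ whose spreading model $(e_i)$ satisfies $\lambda:=\|e_1-e_2\|>1$. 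Finally, by the defining inequality of a spreading model, for any $0<\delta<\lambda-1$ one has $\|y_k-y_\ell\|\ge 1+\delta$ for all sufficiently large $k<\ell$, and $(y_k)$ is then the desired infinite $(1+\delta)$-separated sequence in $S_X$.

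The main obstacle is entirely contained in Theorem \ref{c_0 spreading models}; once that is granted, the Elton--Odell theorem is reduced to a clean case split. The only mildly delicate point in the reduction is the extraction of a normalized weakly null sequence in an $\ell_1$-free space, which is standard, so no further technical work is required.
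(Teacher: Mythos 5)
Your proof is correct and follows essentially the same route the paper sketches in its introduction: dispose of the $c_0$ and $\ell_1$ cases by non-distortability, otherwise extract a normalized weakly null basic sequence and invoke Theorem \ref{c_0 spreading models} (applied to the closed span $[(x_i)]$, where neither $c_0$ nor $\ell_1$ embeds) to force a spreading model $(e_i)$ with $\|e_1-e_2\|>1$. The only point worth noting is that $\|e_1-e_2\|\ge 1$ automatically by suppression $1$-unconditionality of spreading models of weakly null sequences, so the failure of the spreading-model hypothesis indeed gives strict inequality $>1$, exactly as you claim.
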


For the proof of Theorem \ref{c_0 spreading models} we need to recall some terminology. A collection $\F\subseteq [\N]^{<\omega}$ is called {\em thin} if there do not exist $F, G\in\F$ with $F$ being a proper initial segment of $G$. $\F$ is {\em large} in $M\in[\N]^{\omega}$ if for all $N\in [M]^{\omega}$ there exists an initial segment $F$ of $N$ with $F\in \F$. For a sequence $(x_i)\subseteq X$ and $E\in [\N]^{<\omega}$ we set $x_E=\sum_{i\in E}x_i$. For a thin $\F\subseteq [\N]^{<\omega}$ we let 
$$\F^I=\{G{\in[\N]^{<\omega}}: G\  \text{is an initial segment of some } F\in \F\}.$$

\begin{lem}\label{weakly null}
Let $X$ and $(x_i)$ be as in the hypothesis of Theorem \ref{c_0 spreading models}. Let $\F$ be a collection of finite subsets of $\N$ satisfying 
\begin{equation}\label{bounded thin}
\sup \{\|x_E\|: E\in \F\}<\infty.
\end{equation}
Then there exists $M\in [\N]^{\omega}$ so that for all $E_1<E_2<\ldots ${ with $E_i\in \F\cap[M]^{<\omega}$ for all $i\in\N$, the sequence }$(x_{E_i})$ is weakly null.
\end{lem}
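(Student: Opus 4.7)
My plan is to argue by contradiction. Let $C := \sup_{E \in \mathcal{F}} \|x_E\|$ and suppose no such $M$ exists; then for every $M \in [\mathbb{N}]^\omega$ there are successive $E_1 < E_2 < \ldots$ in $\mathcal{F} \cap [M]^{<\omega}$ with $(x_{E_i})$ not weakly null. Since $\ell_1 \not\hookrightarrow X$, Rosenthal's $\ell_1$ theorem lets me thin so that $(x_{E_i})$ is additionally weakly Cauchy with $w^*$-limit $x^{**} \in X^{**}$. Non-weak-nullity forces $x^{**} \neq 0$, and the biorthogonal observation $x_k^*(x_{E_i}) = 0$ for all large $i$ (each fixed $k$) forces $x_k^*(x^{**}) = 0$ for every $k$, so $x^{**} \notin X$: the sequence $(x_{E_i})$ is a non-trivial weakly Cauchy sequence.

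Since $(x_{E_i})$ has no norm-convergent subsequence (such a subsequence would converge in norm, hence weakly, to an element of $X$---contradicting $x^{**} \notin X$), a standard separation argument produces a further subsequence that is $\delta$-separated in norm for some $\delta > 0$. The pair differences $y_i := x_{E_{2i}} - x_{E_{2i-1}}$ are then disjointly supported block vectors of $(x_j)$, bounded above by $2C$, bounded below by $\delta$, and weakly null in $X$ (the two $w^*$-limits cancel). Thus $(y_i/\|y_i\|)$ is a normalized weakly null block basis of $(x_j)$, and by Ramsey a further subsequence generates a spreading model $(e_i)$; by the Theorem~\ref{c_0 spreading models} hypothesis $\|e_1 - e_2\| = 1$ together with Remark~4.2, $(e_i)$ is $1$-equivalent to the unit vector basis of $c_0$.

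The main obstacle is converting this configuration into a contradiction with $\ell_1 \not\hookrightarrow X$. The $c_0$-spreading-model bound gives $\bigl\|\sum_{i=1}^n y_i\bigr\| \leq 2C(1+o(1))$ along a subsequence, equivalently $\|x_A - x_B\| \leq 2C(1+o(1))$ with $A := \bigcup_{i=1}^n E_{2i}$ and $B := \bigcup_{i=1}^n E_{2i-1}$. On the other hand, choosing $f \in X^*$ and $\epsilon > 0$ with $f(x_{E_i}) \geq \epsilon$ (after a sign choice and further thinning, using non-weak-nullity), one has $\|x_A + x_B\| \geq 2n\epsilon/\|f\|$, linear in $n$. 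The reverse triangle inequality forces $\bigl\|\sum_{i=1}^n x_{E_{2i}}\bigr\| \geq cn$. A spreading model $(v_i)$ of the weakly Cauchy non-weakly-null sequence $(x_{E_{2i}})$ is therefore a normalized $1$-spreading, conditional basic sequence with linear partial-sum growth $\bigl\|\sum_{i=1}^n v_i\bigr\| \geq cn$. The crux is then to invoke the structural results of Section~2 on conditional spreading bases (Theorem~\ref{spreading main}): since $\ell_1 \not\hookrightarrow [(v_i)]$ (else a subsequence of $(x_{E_{2i}})$ would be $\ell_1$-equivalent, contradicting $\ell_1 \not\hookrightarrow X$), the difference basis of $(v_i)$ is skipped unconditional and shrinking by parts~(b) and~(d); combining this with the linear partial-sum growth of $(v_i)$ and the $c_0$-spreading-model constraint on $(y_i)$ should produce a seminormalized weakly null block basis of $(x_j)$ equivalent to the $\ell_1$-unit vector basis---the desired contradiction with $\ell_1 \not\hookrightarrow X$.
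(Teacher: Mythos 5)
Your proposal diverges from the paper's proof at a crucial point, and the divergence opens a gap that your last paragraph does not close.

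The paper's proof hinges on Elton's near unconditionality theorem: passing to a subset $M$ on which $\bigl\|\sum_{i\in E}\delta_i x_i\bigr\|\le C\bigl\|\sum_{i\in F}\delta_i x_i\bigr\|$ for $E\subseteq F\subseteq M$ and any signs $\delta_i=\pm1$. Applied with $E=\bigcup_j E_{2j-1}$, $F=\bigcup_j(E_{2j-1}\cup E_{2j})$, $\delta_i=+1$ on $E$ and $-1$ on $F\setminus E$, this gives directly
$n\ep\le\bigl\|\sum_{j\in G}x_{E_{2j-1}}\bigr\|\le C\bigl\|\sum_{j\in G}z_j\bigr\|$,
so the weakly null sequence $(z_j)$ has linearly growing partial sums, which immediately contradicts the forced $c_0$ spreading model. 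You never invoke Elton, and without it you cannot compare $\|\sum x_{E_{2j-1}}\|$ to $\|\sum z_j\|$. Instead you bound $\|x_A - x_B\|$ by \emph{using} the $c_0$ spreading model of $(y_i)$, then combine with the functional estimate on $\|x_A+x_B\|$ to get linear growth of $\|x_A\|=\bigl\|\sum x_{E_{2i}}\bigr\|$. That derivation is fine as far as it goes, but it produces linear growth for the \emph{non-weakly-null} sequence $(x_{E_{2i}})$, to which the theorem's hypothesis says nothing. The resulting configuration — a bounded, weakly Cauchy, non-weakly-null sequence with linear partial-sum growth, whose consecutive differences generate a $c_0$ spreading model — is exactly what the summing basis of $c_0$ looks like, and $c_0$ contains no $\ell_1$. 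So no contradiction is forced, and your closing sentence (``should produce a seminormalized weakly null block basis $\ldots$ equivalent to the $\ell_1$-unit vector basis'') is an aspiration, not an argument.

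Two further specific problems. First, the parenthetical justification for $\ell_1\not\hookrightarrow[(v_i)]$ is invalid: $[(v_i)]$ is the spreading-model space, not a subspace of $X$, and a sequence can have an $\ell_1$ spreading model without any subsequence being $\ell_1$-equivalent (Tsirelson-type phenomena). Second, appealing to Theorem~\ref{spreading main} parts (b) and (d) about the difference basis of a conditional spreading sequence cannot rule out the configuration above, since the summing basis satisfies those conclusions too. The missing ingredient is precisely the Elton near-unconditionality estimate on a subsequence of $(x_i)$, which is what lets the paper turn the sign pattern in $z_j=x_{E_{2j-1}}-x_{E_{2j}}$ into a one-sided lower bound. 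I would suggest restarting from that point: pass to the Elton subsequence $M$ first, and observe that the rest of the proof is then three lines.
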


\begin{proof}
By Elton's near unconditionality theorem \cite{E}, there exists $M\subseteq \N$ such that for some $C<\infty$ the subsequence $(x_i)_{i\in M}$ satisfies { for all  $E\subseteq F\in [M]^{<\omega}$,
\begin{equation}\label{Elton}
 \big\|\sum_{i\in E}\delta_i x_i\big\|\le C \big\|\sum_{i\in F}\delta_i x_i\big\|
\quad\text{for all choices of signs}, \ \delta_i=\pm 1.
\end{equation}
}

Suppose that for some $E_1<E_2<\ldots $, $E_i\in \F$ { with } $E_i\subseteq M$ for all $i$, { the sequence }
$(x_{E_i})$ is not weakly null. Then { after passing to a subsequence, there exists $\ep>0$ and $f\in B_{X^*}$  so that $f(x_{E_j})>\ep$ for all $j\in \N$.} Since $X$ does not contain $\ell_1$, by Rosenthal's $\ell_1$ theorem and passing to a { further} subsequence, we may assume that $(x_{E_j})$ is weak Cauchy.

Let $z_j=x_{E_{2j-1}}-x_{E_{2j}}$ for $j\in \N$. Then $(z_j)$ is weakly null and moreover by (\ref{Elton})
\begin{equation*}
n \ep\le \big\|\sum_{j\in G}x_{E_{2j-1}}\big\|\le C\big\|\sum_{j\in G} z_j\big\|
\end{equation*}
for all $|G|=n$, $n\in \N$. Thus $(z_j/\|z_j\|)_j$ cannot have a $c_0$ spreading model since $\sup_j \|z_j\|<\infty$ by the assumption (\ref{bounded thin}).
\end{proof}

\begin{lem}\label{induction step}
Let $X$ and $(x_i)$ be as in the hypothesis of Theorem \ref{c_0 spreading models}. Let $\F$ be a thin { collection of finite subsets of $\N$ which is large in $\N$}. Assume that $(x_{E_i})$ is weakly null for all $E_1<E_2<\cdots$ in $\F$ and 
\begin{equation}\label{boundedness assumption}
\limsup_n \{\|x_E\|:E\in \F, n\le E\}=1.
\end{equation}
Then there exists $N=(n_i)\in [\N]^{\omega}$ so that $\G$, defined by,
\begin{equation*}
\G=\Big\{\bigcup_{i=1}^k E_i:k\in\N, n_k=\min(E_1), \
E_1<\cdots<E_k,  { E_i\in\F\cap[N]^{<\omega}}\ {\rm for}\ i\le k\Big\}
\end{equation*}
is thin and large in $N$ and furthermore $\G$ satisfies (\ref{boundedness assumption}) (when $\G$ replaces $\F$).
\end{lem}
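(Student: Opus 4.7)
The plan is to verify three things about $\mathcal{G}$: thinness, largeness in $N$, and the boundedness condition (\ref{boundedness assumption}). Thinness and largeness are bookkeeping consequences of the corresponding properties of $\mathcal{F}$. For thinness, the key observation is that, by thinness of $\mathcal{F}$, every $G \in \mathcal{G}$ has a unique decomposition $G = E_1 \cup \cdots \cup E_k$ obtained by peeling off successive $\mathcal{F}$-initial segments; if $G \subsetneq G'$ were both in $\mathcal{G}$, then $\min G = \min G'$ would force $n_k = n_{k'}$ and hence $k = k'$, and then the unique decomposition would force $G = G'$. For largeness in $N$, given $L \in [N]^\omega$ with $\min L = n_k$, I peel off $k$ successive $\mathcal{F}$-initial segments from $L$ (using that $\mathcal{F}$ is large in every infinite set) to produce an initial segment of $L$ lying in $\mathcal{G}$.

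The core of the argument is the boundedness condition. Fix $\ep_k \downarrow 0$. The plan is to construct a decreasing chain of infinite sets $\N = M_0 \supseteq M_1 \supseteq \cdots$ together with $n_k = \min M_k$, such that for every $k$
\begin{equation*}
\text{(Q}_k\text{):} \qquad \Big\|\sum_{i=1}^k x_{E_i}\Big\| \le 1 + \ep_k \quad \text{whenever } E_1 < \cdots < E_k \text{ lie in } \mathcal{F} \cap [M_k]^{<\omega}.
\end{equation*}
Setting $N = \{n_1 < n_2 < \cdots\}$, one has $n_j \in M_j \subseteq M_k$ for $j \ge k$, so any $G = \bigcup_{i=1}^k E_i \in \mathcal{G}$ satisfies $E_i \subseteq N \cap [n_k,\infty) \subseteq M_k$, and (Q$_k$) gives $\|x_G\| \le 1 + \ep_k$. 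Since $\min G = n_k \to \infty$ with $k$, this yields $\limsup_n \sup\{\|x_G\| : G \in \mathcal{G}, n \le G\} \le 1$; the matching inequality $\ge 1$ follows by arranging, during the construction, that each $M_k$ also contains a witness set $E^{(k)} \in \mathcal{F}$ with $\min E^{(k)} \ge n_k$ and $\|x_{E^{(k)}}\| \ge 1 - \ep_k$ (available from (\ref{boundedness assumption}) for $\mathcal{F}$), together with the bimonotonicity estimate $\|x_G\| \ge \|x_{E_1}\|$ that holds after an equivalent renorming of $(x_i)$.

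The inductive step $M_{k-1} \rightsquigarrow M_k$ is the main obstacle, and it is where the $c_0$-spreading-model hypothesis enters. The family $\mathcal{H}_k = \{E_1 \cup \cdots \cup E_k : E_1 < \cdots < E_k \in \mathcal{F}\}$ is thin (by the same peeling argument), so by Nash--Williams/Galvin applied to the two-colouring ``$\|\sum x_{E_i}\| > 1 + \ep_k$ or not'' there is an infinite $M \subseteq M_{k-1} \setminus [1, n_{k-1}]$ on which the colour is constant. The good colour gives $M_k$ directly. To rule out the bad colour, I use largeness of $\mathcal{F}$ to extract a block sequence $E_1 < E_2 < \cdots$ in $\mathcal{F} \cap [M]^{<\omega}$. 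By the standing hypothesis of the lemma $(x_{E_j})$ is weakly null, and by (\ref{boundedness assumption}) its norms are at most $1 + o(1)$. If $\|x_{E_j}\| \to 0$ along a subsequence, the $k$-fold sum becomes arbitrarily small, contradicting the bad colour. Otherwise the normalizations form a normalized weakly null block basis of $(x_i)$, and by the Remark immediately following Theorem \ref{c_0 spreading models} some further subsequence generates a spreading model $1$-equivalent to the $c_0$-basis; hence $\|\sum_{l=1}^k x_{E_{j_l}}\|$ approaches $\max_l \|x_{E_{j_l}}\| \le 1 + o(1)$, again contradicting the bad colour.

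The main obstacle is exactly this last dichotomy: converting the combinatorial ``bad'' Ramsey outcome into a normalized weakly null block basis whose $c_0$-spreading model (guaranteed by the global hypothesis of Theorem \ref{c_0 spreading models}) forces the norm of the $k$-fold sum down to approximately $1$. The rest is standard Ramsey machinery for thin families together with careful bookkeeping of the $M_k$'s and of the witness sets that secure the lower bound in (\ref{boundedness assumption}).
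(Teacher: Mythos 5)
Your overall architecture matches the paper's: fix $\ep_k\downarrow 0$, build a decreasing chain $M_1\supseteq M_2\supseteq\cdots$ of infinite sets stabilizing the norms of $k$-fold sums $x_{E_1\cup\cdots\cup E_k}$ by a Ramsey argument on the (thin) family $\mathcal H_k$ of $k$-fold unions, rule out the ``bad'' alternative by extracting a weakly null block sequence whose $c_0$ spreading model collapses the $k$-fold sum to roughly $\max_i\|x_{E_i}\|\le 1+o(1)$, and then diagonalize to get $N=(n_k)$. The paper phrases the Ramsey step as ``the set $\A_k\subseteq[\N]^\omega$ of sets whose relevant initial segment has small norm is open, hence Ramsey,'' while you color the thin family $\mathcal H_k$ directly and invoke Nash--Williams; these are interchangeable. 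Your explicit treatment of the bad color (including the $\|x_{E_j}\|\to 0$ subcase) is a reasonable expansion of what the paper states tersely.

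The genuine gap is in your lower-bound argument for \eqref{boundedness assumption}. You propose to secure $\limsup\ge 1$ by inserting witness sets $E^{(k)}\in\F$ with $\|x_{E^{(k)}}\|\ge 1-\ep_k$ and then invoking ``the bimonotonicity estimate $\|x_G\|\ge\|x_{E_1}\|$ that holds after an equivalent renorming of $(x_i)$.'' But you cannot renorm here: the hypothesis of Theorem~\ref{c_0 spreading models} is the \emph{exact} isometric condition $\|e_1-e_2\|=1$ on spreading models of normalized weakly null block bases, and the target condition \eqref{boundedness assumption} is the exact equality $\limsup_n\{\|x_E\|\}=1$; both are destroyed by passing to an equivalent norm, and without the renorming the inequality $\|x_G\|\ge\|x_{E_1}\|$ simply fails in general. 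Moreover, even granting the inequality, you would need the witness to satisfy $\min E^{(k)}=n_k$ exactly (not merely $\ge n_k$) for it to serve as the first block of an element of $\G$. The paper instead makes a preliminary subsequence passage of $(x_i)$: choosing norming functionals $f_j\in S_{X^*}$ with $f_j(x_j)=1$ and thinning so that $f_n(x_j)<\delta_n 2^{-j}$ for $n<j$, one gets $\|x_E\|\ge f_{\min E}(x_E)>1-\delta_n$ for \emph{every} finite $E$ with $n\le E$. This gives $\liminf_n\{\|x_E\|:n\le E\in[\N]^{<\omega}\}\ge 1$, which renders the lower bound for $\G$ automatic (using only that $\G$ is nonempty arbitrarily far out, which follows from largeness) and avoids any renorming. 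You should replace your witness/bimonotonicity step with this preliminary thinning; the rest of your proof is essentially the paper's argument.
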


\begin{proof}
First we note that by passing to a subsequence, using $(x_i)$ is normalized and weakly null, we may assume that {
\begin{equation}\label{E:liminf}\liminf_{n\to\infty}\{\|x_{E}\|: n\le E\in [\N]^{<\omega}\}\ge 1.\end{equation}
Indeed, for each $j\in\N$ we may choose $f_j\in X^*$ with $\|f_j\|=1$ such that $f_j(x_j)=\|x_j\|=1$.  Fix  $\delta_n\downarrow 0$ and after passing to a subsequence we may assume that $f_n(x_j)<\delta_n 2^{-j} $ for each $n<j$.  Thus, $1-\delta_n< f_{\min E}(x_E)\leq\|x_E\|$, for all $n\le E$ and \eqref{E:liminf} follows.
 }


Let $\ep_k\downarrow 0$ and set 
\begin{multline*}
\A_k=\Big\{ M\in [\N]^{\omega}: {\rm if}\ E_1<\cdots<E_k,\ E_i\in\F\ {\rm for}\ i\le k,\\
E=\bigcup_{i=1}^k E_i\ \text{is an initial segment of}\ M, \ {\rm then}\ \|x_E\|\le 1+\ep_k\Big\}.
\end{multline*}
 { Note that as $\F$ is thin and large in $\N$, for each $M\in[\N]^\omega$ there exists unique $E_1<\cdots<E_k$ with $\ E_i\in\F$ for $1\leq i\leq k$ such that $\cup_{i=1}^k E_i$ is an initial segment of $M$.  Thus,
whether or not a sequence $M\in [\N]^{\omega}$ is contained in $A_k$ depends entirely on a unique initial segment of $M$.  This makes  $A_k\subset [\N]^\omega$ open in the product topology.  Open sets are Ramsey, }
 so we can find subsequences of $\N$, $M_1\supset M_2\supset\cdots$, so that either $[M_k]^{\omega}\subseteq \A_k$ or $[M_k]^{\omega}\cap \A_k=\emptyset$ for each $k$.

By the 1-equivalent to $c_0$ spreading model hypothesis we must always have $[M_k]^{\omega}\subseteq \A_k$. Let $N=(n_i)$ be a diagonal sequence, $(n_i)_{i=k}^{\infty}\in M_k$ for all $k$. Define $\G$ as in the statement of the lemma with respect to $N$. 
\end{proof}

\begin{proof}[Proof of Theorem \ref{c_0 spreading models}]
We may assume, using \cite{E} as in the proof of Lemma \ref{weakly null}, that for some $C<\infty$,
\begin{equation}\label{Elton2}
\|x_E\|\le C\|x_F\|\ \text{for all}\ E\subseteq F\in [\N]^{{<\omega}}.
\end{equation}

We will show that for $\alpha<\omega_1$ there exists $N_{\alpha}=(n^{\alpha}_i)_i\in[\N]^{\omega}$ and $\G_{\alpha}\subseteq [N_{\alpha}]^{<\omega}$ so that $\G_{\alpha}$ is thin and large in $N_{\alpha}$. Moreover, {$\G^I_{\alpha}$ has} Cantor-Bendixson index $CB(\G^I_{\alpha})\ge \omega^{\alpha}$ and 
\begin{equation}\label{index}
\sup\{\|x_E\|: E\in \G_{\alpha},\ n^{\alpha}_k\le E\}\le 1+\ep_k
\end{equation}
where $\ep_k\downarrow 0$ is fixed.  { By \eqref{Elton2} we have that 
\begin{equation}\label{index2}
\sup\{\|x_E\|: E\in \G^I_{\alpha},\ n^{\alpha}_k\le E\}\le C(1+\ep_k).
\end{equation}

 Recall that if $K$ is a countable set then its Cantor-Bendixson index will be a countable ordinal.  Thus, the Cantor-Bendixson index of $\cup_{\alpha<\omega_1} G^I_\alpha$ is uncountable and }it follows that for some $N=(n_i)\in[\N]^{\omega}$, $1_N$ is in the pointwise closure of $$\{1_E: \|x_E\|\le 2C,\ E\in[\N]^{<\omega}\}{\textrm{ in }\{0,1\}^\N}.$$ Thus $\sup_k\|\sum_{i=1}^k x_{n_i}\|<\infty$ and by (\ref{Elton2}) we obtain that $(x_{n_i})$ is equivalent to the unit vector basis of $c_0$.

To begin we use Lemma \ref{induction step} applied to $\{\{j\}: j\in \N\}$ to obtain $N_1=(n^1_i)$ and $\G_1=\{E: n^1_k=\min E,\ |E|=k,\ E\subseteq N_1\}$ satisfying (\ref{index}), and note that $CB(\G^I_1)=\omega$. Assume $N_{\alpha}$ and $\G_{\alpha}$ are chosen to satisfy the given conditions. Choose $\tilde N_{\alpha+1}\subseteq N_{\alpha}$ by Lemma \ref{weakly null}. Then apply Lemma \ref{induction step} to $\tilde N_{\alpha+1}$ and $\G_{\alpha}$ to obtain $N_{\alpha+1}$ and $\G_{\alpha+1}$. By the definition of $\G_{\alpha+1}$, $CB(\G^I_{\alpha+1})\ge \omega^{\alpha+1}$.

If $\alpha$ is a limit ordinal, choose $\beta_n\uparrow \alpha$, and let $\tilde N_{\alpha}$ be a diagonal sequence of $(N_{\beta_n})$ so that $(\tilde n^{\alpha}_i)_{i=k}^{\infty}\subseteq N_{\beta_k}$ and (\ref{index}) holds. Let $\tilde\G_{\alpha}=\{E\subseteq \tilde N_{\alpha}: E\subseteq \G_{\beta_n}\ \text{for some}\ n\}$. Apply Lemmas \ref{weakly null} and \ref{induction step} as above.
\end{proof}

Recall that { the \em $n$-dimensional asymptotic structure of $X$} (with respect to a fixed filter $\text{cof}(X)$ of finite co-dimensional subspaces of $X$) is the collection $\{X\}_n$ of
normalized basic sequences $(e_i)_1^n$ satisfying the following.  For all $\ep>0$ and all $X_1\in \text{cof}(X)$ there exists $x_1\in S_{X_1}$ such that for all $X_2\in \text{cof}(X)$ there exists $x_2\in S_{X_2}$ so on so that for all $X_n\in \text{cof}(X)$ there exists $x_n\in S_{X_n}$ so that $(x_i)_1^n$ is $(1+\ep)$-equivalent to $(e_i)_1^n$ \cite{MMT}.  $X$ is asymptotic-$c_0$ if for some $K<\infty$ and all $n$, $(e_i)_1^n\in \{X\}_n$ implies that $(e_i)_1^n$ is $K$-equivalent to the unit vector basis of $\ell_{\infty}^n$. In this case $X^*$ must be separable and the condition can be described in terms of weakly null trees. Namely, $X$ is asymptotic-$c_0$ (assuming $X^*$ is separable) if and only if for some $K<\infty$ for all $n\in\N$ and all normalized weakly null trees $(x_{\alpha})_{\alpha\in T_n}$ in $X$, some branch is $K$-equivalent to the unit vector basis of $\ell_{\infty}^n$ where
$T_n=\{(k_1, k_2, \ldots, k_n): 1\le k_1<\cdots<k_n\}$.

The following question is open.

\begin{prob}\label{as c_0 problem}
Suppose that $\ell_1$ does not embed into $X$ and every spreading model generated by weakly null normalized sequences in $X$ is equivalent to the unit vector basis of $c_0$. Does $X$ contain an asymptotic-$c_0$ subspace? Does $X$ contain a subspace $Y$ with $Y^*$ separable?
\end{prob}

%

Note that the space $JH$ constructed by Hagler \cite{H} has non separable dual, does not contain $\ell_1$ and every weakly null normalized sequence has a subsequence equivalent to the unit vector basis of $c_0$. So if the problem has affirmative answer it is necessary to pass to a subspace.
We will prove a weaker theorem.

\begin{thm}\label{asymptotic c_0}
Suppose that a Banach space $X$ does not contain an isomorphic copy of $\ell_1$ and every asymptotic model $(e_i)$ generated by weakly null arrays in $X$ is equivalent to the unit vector basis of $c_0$. Then 

i) $X^*$ is separable and thus $X$ embeds into a space $Y$ with a shrinking basis $(y_i)$.

ii) $X$ is asymptotic-$c_0$ (with respect to the basis $(y_i)$). 
\end{thm}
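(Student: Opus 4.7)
The plan is to prove (i) first (which also provides the basis $(y_i)$ needed in (ii)) and then (ii); both parts proceed by contradiction using the asymptotic model hypothesis.

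For (i), I would suppose $X^*$ is not separable. Since $\ell_1\not\hookrightarrow X$, Theorem 3.14 of \cite{AJO} (as invoked in Proposition \ref{non separable dual}) yields a normalized $\ell^+_1$-weakly null tree $(y_\alpha)_{\alpha\in T_\omega}$ in $X$: each $(y_{\alpha\smallfrown n})_n$ is normalized weakly null, and some $c>0$ satisfies $\|\sum_i a_i y_{\alpha_i}\|\ge c\sum_i a_i$ for every branch $(\alpha_i)$ of $T_\omega$ and $a_i\ge 0$. The key step is to select a \emph{weakly null} branch: fix a countable dense sequence $(f_k)\subseteq B_{X^*}$ and recursively pick $\alpha_i=\alpha_{i-1}\smallfrown n_i$ so that $|f_k(y_{\alpha_i})|<1/i$ for every $k\le i$, which is possible because the children of $\alpha_{i-1}$ are weakly null. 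Then $(z_i):=(y_{\alpha_i})$ is a normalized weakly null sequence in $X$ which still satisfies $\|\sum a_i z_i\|\ge c\sum a_i$ for $a_i\ge 0$. Passing to a subsequence via Ramsey theory produces a weakly null spreading model $(e_i)$ with $\|\sum a_i e_i\|\ge c\sum a_i$ for $a_i\ge 0$, so $(e_i)$ is not equivalent to the unit vector basis of $c_0$. Setting $x^i_j:=z_j$ for all $i,j$ yields a weakly null array whose asymptotic model is exactly $(e_i)$, contradicting the hypothesis. Hence $X^*$ is separable, and by Zippin's embedding theorem $X$ embeds into a Banach space $Y$ with a shrinking basis $(y_i)$.

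For (ii), I would show $X$ is asymptotic-$c_0$ with respect to $(y_i)$ by the tree reformulation and contradiction. If not, for each $K\in\N$ there exist $n_K$ and a normalized block tree $(x^{(K)}_\beta)_{\beta\in T_{n_K}}$ of $(y_i)$ in $X$---weakly null at each node since $(y_i)$ is shrinking and the supports of children tend to infinity---with no branch $K$-equivalent to $\ell_\infty^{n_K}$. Each branch $(\beta_i)$ then admits a witness $(a^\beta_i)\in[-1,1]^{n_K}$ with $\|\sum a^\beta_i x^{(K)}_{\beta_i}\|>K$; by pigeonhole on a fine discretization of $[-1,1]^{n_K}$ and Ramsey pruning one may assume all branches of $T^{(K)}$ share a common witness $(a^{(K)}_i)$. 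I would then assemble a weakly null array $(z^i_j)_{i,j\in\N}$ in $X$ from the trees $\{T^{(K)}\}$ by organizing rows into consecutive blocks, the $K$-th block containing $n_K$ rows drawn from the successive levels of $T^{(K)}$ via a plegma-style coupling, so that within that block diagonals $(z^i_{k_i})_i$ with sufficiently fast-growing $k_i$'s approximate genuine branches of $T^{(K)}$; the shrinking basis guarantees each row is weakly null. An asymptotic model $(e_i)$ of this combined array then satisfies, by 1-spreading and the retained badness in each block, $\|\sum_{i=1}^{n_K} a^{(K)}_i e_i\|\gtrsim K$ for every $K$, so $(e_i)$ is not equivalent to the unit vector basis of $c_0$---again a contradiction.

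The main obstacle is the array extraction in (ii): a single weakly null array must carry the badness of infinitely many finite trees at once. A naive diagonalization picks vectors from distinct branches, and the $\ell_\infty^n$-badness, which lives on chains in the tree, is easily lost on off-chain diagonals. The remedy is a careful plegma/Ramsey construction in each $T^{(K)}$: pick at every level a weakly null sequence of nodes (using the shrinking basis) arranged so that coherent diagonals trace out approximate branches; the 1-spreading property of the asymptotic model then propagates the inherited branch-badness to the leading positions. By contrast, the extraction in (i) is straightforward because only a single weakly null branch is needed, and the dense-sequence diagonal argument against $(f_k)$ delivers it.
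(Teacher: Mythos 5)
For part (i) your argument is circular.  You assume $X^*$ is not separable for contradiction, but then ``fix a countable dense sequence $(f_k)\subseteq B_{X^*}$''---such a sequence exists precisely when $X^*$ is norm-separable, which is what you are trying to prove.  Without it there is no way to certify that the selected branch is weakly null, and in general an $\ell_1^+$ weakly null tree need not have any weakly null branch; this is exactly the phenomenon that distinguishes spaces with non-separable dual.  The paper avoids the problem entirely: it invokes Stegall's theorem to produce a Haar-like system $(x_{n,i})\subset X$ over a Cantor set $\Delta\subseteq S_{X^*}$, and then builds a Rademacher-type sequence $(r_n)$.  At each stage it takes a pairwise-disjointly-supported weak Cauchy sequence of Haar vectors (weak Cauchy subsequences exist by Rosenthal's $\ell_1$ theorem, using $\ell_1\not\hookrightarrow X$ rather than separability of $X^*$), whose difference sequence is weakly null; the $c_0$-asymptotic-model hypothesis then bounds $\|r_{n+1}\|$ while the Haar structure forces $(r_n)$ to dominate the $\ell_1$ basis, a contradiction.

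For part (ii) your outline is pointed in the right direction, but the step you yourself flag as the main obstacle---``the $\ell_\infty^n$-badness \ldots is easily lost on off-chain diagonals''---is exactly where the entire proof lives, and your sketch does not supply it.  The paper also sets things up more economically: by the symmetry of $\{X\}_n$ under sign changes it suffices to bound $\|\sum_{i=1}^n e_i\|$, so no ``common witness'' discretization is needed; and it works with a single bad tree for one large constant $C$ at a time rather than coupling the trees $T^{(K)}$ for all $K$ into one array.  The substantive content is then (a) an explicit inductive scheme defining each $y^k_i$ as a sum of level-$k$ nodes arranged so that every diagonal $(y^k_{i_k})_{k=1}^n$ contains a full branch, with the asymptotic-model hypothesis giving $\|y^k_i\|\le K$, and (b) a quantitative Gap lemma (a finite combinatorial fact about norm-one functionals and sequences $K$-equivalent to $\ell_\infty^N$) used to ensure each branch functional $f_\beta$ is $\ep$-small on the off-branch portion of $\sum_k y^k_{i_k}$, so that $f_\beta(\sum_k y^k_{i_k})\ge C-\ep$.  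Both the node bookkeeping and the Gap lemma are essential and are absent from your proposal.
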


Recall that $(e_i)$ is an { \em asymptotic model of $X$}, denoted by $(e_i)\in AM_w(X)$, generated by a normalized weakly null array $(x^i_j)_{i, j\in\N}$ if $(x^i_j)_{j=1}^{\infty}$ is weakly null for all $i\in\N$, and for some $\ep_n\downarrow 0$, all $n$ and all $(a_i)_1^n\subseteq [-1, 1]$ and $n\le k_1<k_2<\ldots <k_n$

\begin{equation}\label{as model}
\left| \Big\|\sum_{i=1}^n a_i x^i_{k_i}\Big\|- \Big\|\sum_{i=1}^n a_i e_i\Big\|\right|\le\ep_n.
\end{equation}
Asymptotic models were introduced in \cite{HO}.  { If every $(e_i)\in AM_w(X)$ is equivalent to the unit vector basis of $c_0$, then there exists} $K<\infty$ so that every $(e_i)\in AM_w(X)$ is $K$-equivalent to the unit vector basis of $c_0$ \cite{HO}.

The hypothesis of the theorem can be contrasted with being asymptotic-$c_0$ as follows. The asymptotic model condition implies that for some $K$, and every $n\in \N$ and normalized weakly null tree $(x_{\alpha})_{\alpha\in T_n}$ of a certain type, some branch is $K$-equivalent to the unit vector basis of $\ell^n_{\infty}$. The ''certain type'' condition is: there exist $n$ normalized weakly null sequences $(x^i_j)_{j=1}^{\infty}$, $1\le i\le n$ so that if $\alpha=(\ell_1, \ldots, \ell_k)$ then $x^k_{\ell_k}=x_{\alpha}$.  In short, the successor sequences to each $|\beta|=k-1$ are all the same, depending only on $k$, for all $1\le k\le n$.  { Theorem \ref{asymptotic c_0} states that if these specific normalized weakly null trees in $X$ each have a branch $K$-equivalent to the unit vector basis of $\ell_\infty^n$ then all normalized weakly null trees $(x_{\alpha})_{\alpha\in T_n}$ in $X$ do as well.}

\begin{proof}[Proof of Theorem]
i) We first show that $X^*$ is separable. Assume not. By a result of Stegall \cite{S} for all $\ep>0$ there exists $\Delta\subseteq S_{X^*}$, $\Delta$ is $w^*$-homeomorphic to the Cantor set, and a Haar like system $(x_{n,i})\subseteq X$. More precisely, there exist a sequence $(A_{n,i})$ of subsets of $\Delta$ for $n=0, 1, 2, \ldots$ and $i=0, 1, \ldots, 2^n -1$ such that $A_{0,0}=\Delta$ and each $A_{n,i}$ is the union of disjoint, non-empty, clopen subsets $A_{n+1, 2i}$ and $A_{n+1, 2i+1}$ with $\lim_{n\to \infty}\sup_{0\le i<2^n}\text{diam}(A_{n,i})=0$, and Haar functions $h_{n,i}\subseteq C(\Delta)$ (relative to $(A_{n,i})$) so that 
$$h_{2^n+i}:=1_{A_{n+1, 2i}}-1_{A_{n+1, 2i+1}},\ \ n=0, 1, \ldots, \ i=0, 1,\ldots, 2^n-1.$$
Finally,  $(x_{n,i})\subseteq X$ is a Haar like system (relative to $(A_{n,i})$) if, indexing above Haar functions as $h_{2^n+i}=h_{n,i}$, we have
$\|x_{n,i}\|\le 1+\ep$ for all $(n,i)$ so that
$$\sum_{n=0}^{\infty}\sum_{i=0}^{2^n-1}\|x_{n,i}|_{\Delta}-h_{n,i}\|_{C(\Delta)}<\ep.$$

For simplicity in what follows we will assume $x_{n,i}|_{\Delta}=h_{n,i}$ and ignore the tiny perturbations, and we will refer to the sets $A_{n,i}$'s as intervals. We will construct a Rademacher type system $(r_n)$ from the $x_{n,i}$'s and conclude that $\ell_1\embed X$ to get a contradiction. 

Begin with $r_1\equiv x_{0,0}$ and suppose $r_1, \ldots, r_n\in \text{span}(x_{k,i})$ have been constructed so that for each choice of signs $(\ep_i)_1^n$ there is an interval $I$ in $\Delta$ on which for $i\le n$, $r_i{|_I}=\ep_i$. Fix such an $I$ and consider the subsequence $(x_{k,l})$ that is `supported' on $I$, that is, ${\rm supp}x_{k,l}|_{\Delta}\subseteq I$. A further subsequence { has pairwise disjoint support and a further subsequence of that is weak Cauchy. Thus }the corresponding difference sequence is weakly null. The difference sequence has norm in $[1,2]$ and take values {$-1, 0,1$ on $I$}. 

Now consider that this has been done for all $2^n$ such $I$'s. Label the sequences as $(d^i_j)_{j=1}^{\infty}$ for $i\le 2^n$. By the asymptotic model hypothesis (applied to the weakly null array  $(d^i_j)_{j=1}^{\infty}$, $i\le 2^n$) we can form $r_{n+1}=\sum_{i=1}^{2^n}d^i_{j_i}$ with $1\le \|r_{n+1}\|\le 2K$.  

{ If $(a_n)_{n=1}^N\subseteq \R$ we choose an interval $I\subseteq \Delta$ such that $r_n|_I=sign(a_n)$ for all $1\leq n\leq N$.  Thus, $\|\sum_n a_n r_n\|\ge \|\sum_n a_n r_n|_I\|_{C(I)|}= \sum_n |a_n|$.
Thus $(r_n)$ is a seminormalized sequence which dominates the unit vector basis of $\ell_1$. This contradicts that $\ell_1$ does not embed into $X$ and hence $X^*$ must be separable.} By Zippin's theorem $X$ embeds into a space $Y$ with a shrinking basis $(y_i)$.

ii)  We proceed to show that $X$ is an asymptotic-$c_0$ space with respect to the basis $(y_i)$. We need to prove that there exists a constant $C$ such that for all $n$ every asymptotic space $(e_i)_1^n\in\{X\}_n$ is $C$-equivalent to the unit vector basis of $\ell_{\infty}^n$. If $(e_i)_1^n\in\{X\}_n$ then { also } $(\ep_i e_i)_1^n\in\{X\}_n$ for all sequence of signs $(\ep_i)_1^n$. Therefore, it is sufficient to show that there exists $C$ such that for all { $n\in\N$ and for} every asymptotic space $(e_i)_1^n\in\{X\}_n$ we have $\|\sum_{i=1}^n e_i\|\le C$.

Suppose this is not the case. Then for all $C\ge 1$ there exists $n$ and a normalized asymptotic tree (i.e., countably branching block tree) { $(x_{\alpha})_{\alpha\in T_n}$} in $X$ so that for every branch $\beta=(x_i)_{i=1}^n$ of { $(x_{\alpha})_{\alpha\in T_n}$} there exists $f_{\beta}\in S_{X^*}$ with $f_{\beta}(\sum_{i=1}^n x_i)>C$.

We will construct weakly null seminormalized sequences $(y_i^1)_{i\ge 1}, (y_i^2)_{i\ge 2}, \ldots, (y_i^n)_{i\ge n}$ from the linear combinations of carefully chosen nodes  of { $(x_{\alpha})_{\alpha\in T_n}$} so that, after passing to subsequences in each and relabeling, the array $\{y_i^k: 1\le k\le n, i\ge 1\}$ satisfies $\|y_i^k\|\le K$ for all $1\le k\le n$, $i\ge 1$ and  $\|\sum_{k=1}^n y_{i_k}^k\|>C$ for all $i_1<\cdots <i_n$. This will contradict the assumption that all asymptotic models generated by weakly null arrays are $K$-equivalent to the {unit vector basis of $c_0$.} 

We first describe a general procedure of extracting an array of weakly null sequences from a tree. The actual array will be obtained by { applying this procedure} to a carefully pruned tree (using our assumptions) that we describe { later}.

\noindent{\bf Extracting arrays from trees.}\  The main idea of the construction is that each $y^k_i$ is chosen to be a linear combination of nodes of { $(x_{\alpha})_{\alpha\in T_n}$} from { the } $k$th level so that for every $i_1<\cdots <i_n$ the union of { the} supports (with respect to the tree $T_n$) of $y^1_{i_1}, \ldots, y^n_{i_n}$ contains a (unique) full branch of the tree { $T_n$}.    

 Let { $(x_{\alpha})_{\alpha\in T_n}$ be the tree above. For $(i_1,\ldots,i_k)\in T_n$ we label the node $x^k(i_1, \ldots, i_k):=x_{(i_1,\ldots,i_k)}$. The superscript (which denotes the $k$th level in the tree)} is redundant but we keep it for the sake of clarity.

We will construct the desired $n$-array so that all rows $(y^k_i)_{i\ge k}$ and all diagonal sequences $(y^k_{i_k})_{k=1}^n$, $i_1<\ldots<i_n$ are blocks sequences. We will often prune the tree {$(x_{\alpha})_{\alpha\in T_n}$} by deleting nodes and then relabel the remaining nodes. The pruned tree will always be a full (sub)-tree. Moreover, to ease the notation for later constructions we will relabel the full subtree to match the indices so that the resulting array will have the property that for every diagonal sequence  $(y^k_{i_k})_{k=1}^n$, $i_1<\ldots<i_n$ the corresponding unique full branch is $(x^1(i_1), x^2(i_1, i_2), \ldots, x^n(i_1, i_2, \ldots, i_n))$.

The array is to be labelled as follows and constructed in diagonal order.

\[
\begin{array}{cccccc}
y^1_1 & y^1_2 & y^1_3 & y^1_4 & y^1_5 & \cdots \\
 & y^2_2 & y^2_3 & y^2_4 & y^2_5 & \cdots \\
 & &\ddots &  \ddots &\ddots \\
 &  &  & y^n_n & y^n_{n+1} & \cdots
  \end{array}
\]

Let $y^1_i=x^1(i)$ for all $i\ge 1$. So $(y^1_i)_i$ is the sequence of initial nodes of $(x_{\alpha})_{\alpha\in T_n}$. For the first diagonal sequence $(y^1_1, \ldots, y^n_n)$ take the leftmost branch of $(x_{\alpha})_{\alpha\in T_n}$, that is, 
\begin{equation}\label{first diagonal}
y^1_1=x^1(1), \ y^2_2=x^2(1,2),\  \ldots,\  y^n_n= x^n(1, \ldots, n). 
\end{equation}

{ The node $y^2_3$ will be a sum of two successors to }the nodes $x^1(1)$ and $x^1(2)$ that comprise $y^1_1$ and { $y^1_2$}, respectively. To do this we pick $i_1>2$ and $i_2>2$ large enough so that  $x^2(1, i_1)$ and $x^2(2, i_2)$ are 
supported after $x^1(2)$ (and hence after $x^1(1)$). Delete the nodes $x^2(1, j)$ for $2<j<i_1$ and the nodes $x^2(2, j)$ for $3\le j<i_2$ and relabel the remaining sequences so that the chosen nodes becomes $x^2(1, i_1)=x^2(1, 3)$ and $x^2(2, i_2)=x^2(2, 3)$. Put
\begin{equation}\label{y^2_3}
y^2_3= x^2(1, 3)+ x^2(2, 3).
\end{equation}

We proceed in similar fashion so that each vector {$y^k_j$} of the $k$th row ($j>k>1$) is defined { as a sum of nodes from the $k$th level of the tree $(x_\alpha)_{\alpha\in T_n}$} and which are successors to the nodes that comprise the previously chosen vectors { $y^{k-1}_{k-1},y^{k-1}_{k},...,y^{k-1}_{j-1}$}.  We pick the nodes so that the block conditions are satisfied and relabel the tree after deleting finitely many nodes. Thus $y^3_4$ { is a sum of} nodes successor to the nodes of $y^2_2$ and $y^2_3$ and after relabeling the nodes it becomes 

\begin{equation}\label{y^3_4}
y^3_4= x^3(1, 2, 4)+x^3(1, 3, 4) + x^3(2, 3, 4).
\end{equation}

In general, suppose that  $y^{k-1}_j$ for $k-1\le j<i$ and $k\le n$ are defined. Let { $$y^{k-1}_j=x^{k-1}(\bar t_1)+ x^{k-1}(\bar t_2)+ \ldots=\sum_{m\in A^{k-1}_j}x^{k-1}(\bar t_m) \quad\textrm{ for some }A^{k-1}_j\subset \N$$ be the enumeration of the (finitely many) nodes comprising $y^{k-1}_j$'s in the order they appear and where each $\bar t_s$ is  a $k-1$-tuple with maximal entry $j$.

We denote concatenation by $(a_1,\ldots, a_n)\con a_{n+1}=(a_1,\ldots, a_n,a_{n+1})$.
By passing to subsequences  and relabeling  the sequences of successor nodes $$(x^k(\bar t_1\con l))_{l\ge j}, (x^k(\bar t_2\con l))_{l>\ge j}, (x^k(\bar t_3\con l))_{l\ge j}, \ldots$$we may assume each of these vectors are supported after the previously chosen ones.  We define $y^k_i$ as a sum of successors to the nodes comprising $y^{k-1}_{k-1}, \ldots, y^{k-1}_{i-1}$. That is, we put 
\begin{equation}\label{y^k_i}
y^k_i=\sum_{j=k-1}^{i-1}\sum_{m\in A^{k-1}_j} x^k(\bar t_m\con i).
\end{equation}
Note that $j$ is the maximal entry of $\bar t_m\in A^{k-1}_j$ and hence $x^k(\bar t_m\con i)$ is a successor of $x^k(\bar t_m)$ as $j<i$.}

This completes the construction of the array. It { follows } that the support of any diagonal sequence  $(y^k_{i_k})_{k=1}^n$, $i_1<\ldots<i_n$ contains the unique full branch $(x^1(i_1), x^2(i_1, i_2), \ldots, x^n(i_1, i_2, \ldots, i_n))$ as desired.

{\bf Pruning the tree.}
For notational convenience we will denote branches $$\beta=(x^1(i_1), x^2(i_1, i_2), \ldots, x^n(i_1, i_2, \ldots, i_n))$$ of the tree by $\beta=(i_1, i_2, \ldots, i_n)$. From the construction the support of (the sum of) each sequence $y^1_{i_1}, \ldots, y^n_{i_n}$ consists of the unique full branch $\beta=(i_1, i_2, \ldots, i_n)$ and other off-branch nodes whose numbers add up quickly as $i_n$ gets large. 
By our assumption there is a branch functional $f_{\beta}$ so that
\begin{equation}\label{assumption}
f_{\beta}\Big(\sum_{k=1}^n x^k(i_1, \ldots, i_k)\Big)>C.
\end{equation} Our goal here is to show that for all $\ep>0$ we can prune the tree so that the array (with respect to the pruned tree) satisfies
\begin{equation}\label{bounded}
\|y^k_i\|\le K{ + \ep}, \ \text{for all}\ 1\le k\le n,\ i\ge k,
\end{equation}
and

\begin{equation}\label{small off-diagonal}
f_{\beta}\Big(\sum_{k=1}^n y^k_{i_k}\Big)\ge C-\ep.
\end{equation}

Let $\ep>0$. Fix $(\ep_k)_{k=1}^n$ so that $\sum_{k=1}^n \ep_k<\ep$. Let {$(x_{\alpha})_{\alpha\in T_n}$} be a full subtree satisfying block conditions described in { the} above construction. That is, every sequence of successor nodes of {$(x_{\alpha})_{\alpha\in T_n}$} is a block basis and whenever $y^k_i$ is defined as in (\ref{y^k_i}) the sequences $(y^1_{i_1}, \ldots, y^n_{i_n})$ are blocks as well. 

As before we will proceed in diagonal order (of the array). Let $y^1_i=x^1(i)$ for all $i\ge 1$. For the first diagonal sequence $(y^1_1, \ldots, y^n_n)$ again we take the leftmost branch of {$(x_{\alpha})_{\alpha\in T_n}$}, that is, 
\begin{equation}
y^1_1=x^1(1), \ y^2_2=x^2(1,2),\  \ldots,\  y^n_n= x^n(1, \ldots, n). 
\end{equation}
The condition (\ref{bounded}) is clearly satisfied since the tree is normalized and the condition (\ref{small off-diagonal}) follows from the assumption (\ref{assumption}).

We wish to define $y^2_3$ as in (\ref{y^2_3}).  This will require two steps. First consider the sequences of level 2 successor nodes $$(x^2(1, l))_{l\ge 3}, (x^2(2, l))_{l\ge 3}.$$
By our main assumption, the array formed by these sequences can be refined to generate an asymptotic model $K$-equivalent to { the unit vector basis of $\ell_\infty^2$.} Thus by passing to subsequences, relabeling and ignoring tiny perturbations we can assume that for all $3\le l_1<l_2$,
\begin{equation}\label{as-model-second-level}
\|x^2(1, l_1)+x^2(2, l_2)\|\le K.
\end{equation}
This will ensure that whenever $y^2_3$ is defined as in (\ref{y^2_3}) the condition (\ref{bounded}) is satisfied. 
The second refinement towards ensuring (\ref{small off-diagonal}) is somewhat more complicated. 

Consider again the sequences of successor nodes $(x^2(1, l))_{l\ge 3}$ and $(x^2(2, l))_{l\ge 3}$. By the main assumption each of these sequences {generate spreading models which are $K$-equivalent to the unit vector basis of $c_0$}.  Fix $N\ge 1+K^2/\ep^2_1+2K/\ep_1.$ By passing to subsequences and relabeling we can assume that both $(x^2(1, l))^{N+3}_{l=3}$ and $(x^2(2, l))^{N+3}_{l=3}$ are $K$-equivalent to the unit vector basis of $\ell_{\infty}^N$.  For every branch $\beta=(i_1, \ldots, i_n)$ of $T_n$ { we let} $f_{(i_1, i_2, \ldots, i_n)}$ denote the corresponding branch functional satisfying (\ref{assumption}).
For each $3\le l\le N+3${,  $f_{(1,l)\Con \bar j}$ and $f_{(2, l)\Con \bar j}$ are the branch functionals for branches extending $(1, l)$ and $(2,l)$ respectively, where $\bar j$ is an $(n-2)$-tuple.} We stabilize the values of these functionals on the chosen nodes. That is, by passing to subsequences and { ignoring} tiny perturbations we can assume that for all $\bar j, \bar j'$ we have{
$$f_{(1, l)\Con \bar j}(x^2(2, t))=f_{(1, l)\Con\bar j'}(x^2(2, t))\ \text{and} \ f_{(2, l)\Con \bar j}(x^2(1, t))=f_{(2, l)\Con \bar j'}(x^2(1, t)),$$ }for all $3\le l, t\le N+3.$

{\bf Claim.}\ There exist $3\le l_1, l_2\le N+3$ so that for all $\bar j$
\begin{equation}\label{gap-base} |f_{(1, l_1)\Con \bar j}(x^2(2, l_2))|<\ep_1\ \text{and}\ |f_{(2, l_2)\Con \bar j}(x^2(1, l_1))|<\ep_1.\end{equation}

For any functional $f$ of norm at most 1 and sequence $(x_t)_{t=1}^n$ which is $K$-equivalent to {the} unit vector basis of $\ell_{\infty}^n$ there is a sequence of signs $\delta_t=\pm 1$ so that

\begin{equation}\label{K/ep}
\sum_{t=1}^{n}|f(x_t)|=\left|f\Big(\sum_{t=1}^{n}\delta_t x_t\Big)\right|\le K.
\end{equation}

It follows that the cardinality {$|\{t: |f(x_t)|\ge \ep_1\}|\le K/\ep_1$. Thus
for each $l$ and $\bar j$,}  
$$|A_l|:=\Big|\{t: |f_{(1, l)\Con \bar j}(x^2(2, t))|< \ep_1\}\Big|\ge N-K/\ep_1.$$
Then for any $B\subset \{3, \ldots, N+3\}$ with $K/\ep_1+1\le |B|<K/\ep_1 +2$ we have 
$$\Big|\bigcap_{l\in B} A_l\Big|\ge 1.$$
Indeed, $N-|B|K/\ep_1\ge N-K^2/\ep^2_1-2K/\ep_1\ge 1$. Fix such a subset $B$ and let $l_2\in \bigcap_{l\in B} A_l$.  Then $|f_{(1, l)\Con \bar j}(x^2(2, l_2))|<\ep_1$ for all $l\in B$. Now consider the functionals $f_{(2, l_2)\Con \bar j}$. Since $(x^2(1, l))_{l\in B}$ is $K$-equivalent to {the unit vector basis of} $\ell_{\infty}^{|B|}$ and $|B|\ge K/\ep_1+1$, by a similar argument as above, there is $l_1\in B$ such that
$|f_{(2, l_2)\Con \bar j}(x^2(1, l_1))|<\ep_1$, proving the claim.

Now we relabel the nodes as $x^2(1,l_1)=x^2(1, 3)$ and $x^2(2, l_2)=x^2(2, 3)$ (by deleting finitely many nodes) and put
\begin{equation}
y^2_3=x^2(1, 3)+x^2(2,3).
\end{equation}

{At this stage the pruned tree has the following {\em gap property} of the branch functionals $f_{(1,3)\con\bar j}$ and $f_{(2,3)\con\bar j}$. 
\begin{eqnarray*}
f_{(1,3)\Con\bar j}\Big((y^1_1 + y^2_3)-(x^1(1) + x^2(1, 3))\Big)=f_{(1,3)\Con\bar j}\Big(x^2(2,3)\Big)<\ep_1,\\
f_{(2,3)\Con\bar j}\Big((y^1_2 + y^2_3)-(x^1(2) + x^2(2, 3))\Big)=f_{(2,3)\Con\bar j}\Big(x^2(1,3)\Big)<\ep_1
\end{eqnarray*}}
{ We have that $x^1(1)$ and $x^2(1,3)$ are the nodes on the branch of $(1, 3)\con \bar j$. Thus the first above inequality states that the branch functional $f_{(1,3)\Con \bar j}$ is small on the off branch part of $y^1_1+y^2_3$, and the second above inequality states that the branch functional $f_{(2,3)\Con \bar j}$ is small on the off branch part of $y^1_2+y^2_3$.  This will be important for us as the branch functionals $f_\beta$ are defined to be large on their branch.  We will eventually be able to obtain \eqref{small off-diagonal} by showing that $f_\beta$ is greater than $C$ on the  branch part of $\sum_{k=1}^n y^k_{i_k}$ and $f_\beta$ is smaller than $\ep$ on the off branch part of $\sum_{k=1}^n y^k_{i_k}$ where $\beta=(i_1,\ldots,i_n)$.}

For the sake of clarity we also show how to define $y^3_4$ as in (\ref{y^3_4}) before proceeding with the inductive step. The array formed by the sequences of level 3 successor nodes $$(x^3(1, 2, l))_{l\ge 4}, (x^3(1, 3, l))_{l\ge 4}, (x^3(2, 3, l))_{l\ge 4}$$
can be refined to generate an asymptotic model $K$-equivalent to {the unit vector basis of $\ell_\infty^3$}. Thus by passing to subsequences, relabeling and ignoring tiny perturbations we get that for all $4\le l_1<l_2<l_3$,
\begin{equation}\label{as-model-third-level}
\|x^3(1, 2, l_1)+ x^3(1, 3, l_2) + x^3(2, 3, l_3)\|\le K.
\end{equation}
This will ensure the condition (\ref{bounded}).

The second refinement is done { as before}. Fix a large $N=N(K,\ep_2/2)$ and using the $c_0$ spreading models assumption pick sequences  $(x^3(1, 2, l))_{l=4}^{N+4}$, $(x^3(1, 3, l))_{l=4}^{N+4}$, and $(x^3(2, 3, l))_{l=4}^{N+4}$ that are $K$-equivalent to the unit vector basis of $\ell_{\infty}^N$. Refine the tree by passing to subsequences of the successors of these so that the branch functionals $f_{(1, 2, l)\Con \bar j}$, $f_{(1, 3, l)\Con \bar j}$, and $f_{(2, 3, l)\Con \bar j}$ are stabilized. That is, their values on the chosen nodes are independent of $\bar j$. Then a similar combinatorial argument as before yields (see the Gap lemma below) a node from each sequence which we relabel as $x^3(1, 2, 4)$, $x^3(1, 3, 4)$, and $x^3(2, 3, 4)$ so that 
$$|f_{(1, 2, 4)\Con \bar j}(x^3(1, 3, 4))|<\ep_2/2,\ |f_{(1, 2, 4)\Con \bar j}(x^3(2, 3, 4))|<\ep_2/2,$$

$$|f_{(1, 3, 4)\Con \bar j}(x^3(1, 2, 4))|<\ep_2/2,\ |f_{(1, 3, 4)\Con \bar j}(x^3(2, 3, 4))|<\ep_2/2, \ \text{and}$$

$$|f_{(2, 3, 4)\Con \bar j}(x^3(1, 2, 4))|<\ep_2/2,\ |f_{(2, 3, 4)\Con \bar j}(x^3(1, 3, 4))|<\ep_2/2.$$

Let 
$$y^3_4=x^3(1, 2, 4)+x^3(1, 3, 4)+x^3(2, 3, 4).$$ Then 
the branch functionals through these nodes satisfy the desired gap properties: { For $1\leq t_1<t_2<4$, denoting} ${\mathbf x}_{(t_1, t_2, {4})}=x^1(t_1)+x^2(t_1, t_2)+x^3(t_1, t_2, {4})$ and ${\mathbf y}_{(t_1, t_2, {4})}=y^1_{t_1}+y^2_{t_2}+y^3_{{4}}$ we have

\begin{eqnarray*}
\left|f_{(1, 2, 4)\Con \bar j}\Big({\mathbf y}_{(1, 2, 4)}-{\mathbf x}_{(1, 2, 4)} \Big)\right|
&\le& \Big|f_{(1, 2, 4)\Con \bar j}\big(x^3(1, 3, 4) \big)\Big|+\Big|f_{(1, 2, 4)\Con \bar j}\big(x^3(2, 3, 4)\big)\Big|\\
&<&\ep_2/2 +\ep_2/2,
\end{eqnarray*}
\begin{eqnarray*}
&&\left|f_{(1, 3, 4)\Con \bar j}\Big({\mathbf y}_{(1, 3, 4)}-{\mathbf x}_{(1, 3, 4)} \Big)\right|\\
&\le&\Big|f_{(1, 3, 4)\Con \bar j}\big(x^2(2, 3) \big)\Big|
+\Big|f_{(1, 3, 4)\Con \bar j}\big(x^3(1, 2, 4) \big)\Big|+\Big|f_{(1, 3, 4)\Con \bar j}\big(x^3(2, 3, 4)\big)\Big|\\
&<&\ep_1 +\ep_2/2 +\ep_2/2,
\end{eqnarray*}
and
\begin{eqnarray*}
&&\left|f_{(2, 3, 4)\Con \bar j}\Big({\mathbf y}_{(2, 3, 4)}-{\mathbf x}_{(2, 3, 4)} \Big)\right|\\
&\le&\Big|f_{(2, 3, 4)\Con \bar j}\big(x^2(1, 3) \big)\Big|
+\Big|f_{(2, 3, 4)\Con \bar j}\big(x^3(1, 2, 4) \big)\Big|+\Big|f_{(2, 3, 4)\Con \bar j}\big(x^3(1, 3, 4)\big)\Big|\\
&<&\ep_1 +\ep_2/2 +\ep_2/2,
\end{eqnarray*}

{  As before, the idea is that $f_{\beta}$ is large on the branch part of $y^1_{t_1} + y^2_{t_2}+y^3_{4}$ and is small on the off branch part where $\beta=(t_1,t_2,4)$. }

{We now} proceed inductively. Suppose that for $k-1\le j<i$ and $k\le n$,
$$y_j^{k-1}=\sum_{m\in A^{k-1}_j}x^{k-1}(\bar t_m)$$ are defined where $x^{k-1}(\bar t_m)$ are $(k-1)$-level nodes and { $A^{k-1}_j\subset \N$ is finite}.  For each $\bar t_m=(t_1, \ldots, t_{k-1})$ denote the sum of the  initial segment of a diagonal sequence of the array constructed thus far by
$${\mathbf y}_{\bar t_m}=\sum_{i=1}^{k-1}y^i_{t_i},$$ and the sum of the initial segment of the tree by
$${\mathbf x}_{\bar t_m}=\sum_{i=1}^{k-1}x^i(t_1, \ldots, t_{k-1}).$$

For the induction hypothesis we also assume that the branch functionals $f_{\bar t_m\Con \bar j}$ for the branches whose initial segments are $\bar t_m$ satisfy the gap {property}: 

\begin{equation}\label{gap induction}
\Big|f_{\bar t_m\Con \bar j}({\mathbf y}_{\bar t_m}-{\mathbf x}_{\bar t_m})\Big|<\sum_{i=1}^{k-1}\ep_i.
\end{equation}

Consider the array formed by the sequences of successor nodes $$(x^k({\bar t_1\con l}))_{l>\max \bar t_1}, (x^k({\bar t_2\con l}))_{l>\max \bar t_2}, \ldots,  (x^k({\bar t_M\con l}))_{l>\max \bar t_M}$$ for $m\in \bigcup_{j=k-1}^{i-1}A^{k-1}_j$ and where $M=|\bigcup_{j=k-1}^{i-1}A^{k-1}_j|$. The array is formed in the order the nodes appear in the support of $y^{k-1}_{k-1}, \ldots, y^{k-1}_{i-1}$. By the main assumption the array generates an asymptotic model $K$-equivalent to the unit vector basis {of $\ell_\infty^M$}.  Thus by passing to subsequences and relabeling we can assume that for all $\max_{1\le m\le M} \max \bar t_m<l_1<l_2<\ldots<l_M$, 

\begin{equation}\label{induction bounded}
\left\|\sum_{m=1}^M x^k({\bar t_m\con l_m})\right\|\le K.
\end{equation}

Fix a large $N=N(K, \ep_k/M)$ (determined by the lemma below). For each $1\le m\le M$, using the fact that every sequence of successor nodes generates a $c_0$ spreading model, pick $\big(x^k{(\bar t_m\con l)}\big)_{l\in B_m}$, $|B_m|=N$, which is $K$-equivalent to the unit vector basis of $\ell^{N}_{\infty}$. For all $m$ and $l\in B_m$, by passing to a subsequence of the successors $(x^{k+1}{(\bar t_m\con l\con j)})_j$ of $x^k({\bar t_m\con l})$ we can assume that all the branch functionals $f_{\bar t_m\Con l\Con \bar j}$ are stabilized on the chosen nodes. That is, for all $\bar j$ and $\bar j'$, ignoring tiny perturbations, we have
$$f_{\bar t_m\Con l\Con \bar j}(x^k{(\bar t_{m'}\con l')})=f_{\bar t_m\Con l\Con \bar j'}(x^k{(\bar t_{m'}\con l')})$$ for all $m\neq m'$, and $l\in B_m, l'\in B_{m'}$. (Note: If $k=n$, the last level of the tree, then all the branch functionals are already determined.)

{\bf Claim.}\ For all $1\le m\le M$ there exist $l_m\in B_m$ such that for all $m\neq m'$
\begin{equation}\label{gap-induction}\Big|f_{\bar t_m\Con l_m\Con \bar j}(x^k{(\bar t_{m'}\con l_{m'})})\Big|<\ep_{k}/M.
\end{equation}

This is consequence of the following combinatorial lemma (for $\ep=\ep_k/M$.) 

\begin{Gap} Let $\ep>0$, $M\in \N$. Then there exists $N=N(\ep, M, K)$ such that given sequences $(x^1_l)_{l=1}^N, \ldots, (x^M_l)_{l=1}^N$  each $K$-equivalent to the unit vector basis of $\ell_{\infty}^N$ and functionals  $(f^1_l)_{l=1}^N, \ldots, (f^M_l)_{l=1}^N$ of norm at most 1 there exists $l_1, \ldots, l_M$ such that $$\Big|f^j_{l_j}(x^i_{l_i})\Big|<\ep,\ \text{for all}\ i\neq j.$$
\end{Gap}
\begin{proof}
The proof is by induction on $M$. For the base case $M=2$ we prove the following which is a slight generalization of (\ref{gap-base}): For all $N_0\in\N$ there exists $N=N(N_0, \ep, K)$ so that whenever $(x^1_l)_{l=1}^N, (x^2_l)_{l=1}^N$ and $(f^1_l)_{l=1}^N, (f^2_l)_{l=1}^N$ are as in the statement there exist $A_1, A_2\subset \{1, \ldots, N\}$ with $|A_1|, |A_2|\ge N_0$ such that for all $j\in A_1$ and $i\in A_2$ we have $|f^1_j(x^2_i)|<\ep$ and $|f^2_i(x^1_j)|<\ep$.

Fix $N\ge N_0(1+K/\ep +K^2/\ep^2)$. For any functional $f$ of norm at most 1 and sequence $(x_i)_1^n$ $K$-equivalent to the unit vector basis $\ell_{\infty}^n$ we have, by (\ref{K/ep}), $|\{i:|f(x_i)|\ge \ep\}|\le K/\ep$. 
Thus for $N_0(1+K/\ep)\le N_1\le N_0(1+K/\ep)+1$,
$$\Big|\bigcap_{l=1}^{N_1}\Big\{1\le i\le N:|f^1_l(x^2_i)|< \ep\Big\} \Big|\ge N-N_1 K/\ep\ge N_0.$$
Let $A_2$ be a subset of $\bigcap_{l=1}^{N_1}\big\{1\le i\le N:|f^1_l(x^2_i)|< \ep\big\}$ with cardinality $N_0$ and we have
$$|A_1|=\Big|\bigcap_{l\in A_2} \Big\{1\le i\le N_1: |f^2_l(x^1_i)|<\ep\Big\}\Big|\ge N_1-N_0 K/\ep\ge N_0,$$ as desired. 

For the induction suppose that for all $N_0\in\N$ there exists $N$ and $A_1, \ldots, A_m$ with $|A_i|\ge N_0$ so that for all $l_i\in A_i$
$$\Big|f^j_{l_j}(x^i_{l_i})\Big|<\ep,\ \text{for all}\ 1\le i\neq j\le m.$$
Fix $N_0\ge 1+K/\ep +K^2/\ep^2$ and apply the argument in the base case for the pairs $(x^i_l)_{l\in A_i}, (x^{m+1}_l)_{l=1}^{N_0}$ and $(f^i_l)_{l\in A_i}, (f^{m+1}_l)_{l=1}^{N_0}$ for $1\le i\le m$ to get the desired $(m+1)-$tuple $l_1, \ldots, l_{m+1}$ so that
$$\Big|f^j_{l_j}(x^i_{l_i})\Big|<\ep,\ \text{for all}\ 1\le i\neq j\le m+1.$$
\end{proof}

Consider $l_1, \ldots, l_M$ from the Claim. We discard the nodes $x^l{(\bar t_m\con l)}$, $l\in B_m$ and $l\neq l_m$ and relabel the rest so that for all $1\le m\le M$, $x^k{(\bar t_m\con l_m)}=x^k{(\bar t_m\con i)}$ where $i=\max_m \max \bar t_m+1$ and put

\begin{equation}\label{y^k_i}
y^k_i=\sum_{j=k-1}^{i-1}\sum_{m\in A^{k-1}_j} x^k({\bar t_m\con i)}.
\end{equation}

By (\ref{induction bounded}) $\|y^k_i\|\le K$. By the induction hypothesis (\ref{gap induction}) and the Claim (\ref{gap-induction}) we have

\begin{equation}
\Big|f_{(\bar t_m, i, \bar j)}\Big(({\mathbf y}_{\bar t_m}+y^k_i)-({\mathbf x}_{\bar t_m}+x^k{(\bar t_m\con i)})\Big)\Big|<\sum_{i=1}^{k-1}\ep_i +\sum_{m=1}^M \ep_k/M= \sum_{i=1}^k \ep_i
\end{equation}
for all $1\le m\le M$ and $\bar j$, as desired. This concludes the construction of the array. 

Now let $\beta=(i_1, \ldots, i_n)$ be arbitrary. Then by the construction and our main assumption we have 
\begin{equation*}
f_{\beta}\Big(\sum_{k=1}^n y^k_{i_k}\Big)\ge f_{\beta}\Big(\sum_{k=1}^n x^k(i_1, \ldots, i_k)\Big)- \Big|f_{\beta}\Big(\sum_{k=1}^n x^k(i_1, \ldots, i_k)-\sum_{k=1}^n y^k_{i_k}\Big)\Big|  \ge C-\sum_{k=1}^n \ep_k>C-\ep.
\end{equation*}
The proof is completed.

\end{proof}

\end{document}